\numberwithin{equation}{section}
\newtheorem{thm}{Theorem}
\newtheorem*{prob*}{Problem}
\newtheorem{prop}[thm]{Proposition}
\newtheorem{lem}[thm]{Lemma}
\newtheorem{cor}[thm]{Corollary}
\newtheorem*{iprob*}{Problem}
\theoremstyle{definition}
\newtheorem*{defi*}{Definition}
\newtheorem*{acks*}{Acknowledgements}
\newcommand{\ZZ}{\mathbf{Z}}
\newcommand{\RR}{\mathbf{R}}
\newcommand{\NN}{\mathbf{N}}
\newcommand{\hb}{\mathrm{H}_{\mathrm{b}}}
\newcommand{\hh}{\mathrm{H}}
\newcommand{\se}{\subseteq}
\newcommand{\lra}{\longrightarrow}
\title[Lamplighters and the bounded cohomology of Thompson's group]{Lamplighters and the bounded cohomology\\ of Thompson's group}
\author[Nicolas Monod]{Nicolas Monod}
\address{\'Ecole Polytechnique Fédérale de Lausanne (EPFL)\\
CH–1015 Lausanne,
Switzerland}
\begin{document}

\begin{abstract}
We prove the vanishing of the bounded cohomology of lamplighter groups for a wide range of coefficients. This implies the same vanishing for a number of groups with self-similarity properties, such as Thompson's group $F$. In particular, these groups are boundedly acyclic.

Our method is ergodic and applies to ``large'' transformation groups where the Mather--Matsumoto--Morita method sometimes fails because not all are acyclic in the usual sense.
\end{abstract}
\maketitle



\section{Introduction}
The initial goal of this note is to prove the following, answering a question of Grigorchuk~\cite[p.~131]{Grigorchuk95}.

\begin{thm}\label{thm:F:R}
Thompson's group $F$ is boundedly acyclic.
\end{thm}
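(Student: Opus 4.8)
The plan is to deduce Theorem~\ref{thm:F:R} from a general bounded-acyclicity statement for lamplighter groups, exploiting the self-similarity of $F$ to reduce the former to the latter. Recall that $F$ may be realized as piecewise-linear homeomorphisms of $[0,1]$ with dyadic breakpoints and slopes in $2^{\ZZ}$; the feature I would use is that $F$ contains, for a suitable shift element $s$, a commuting family of conjugate copies $F_n = s^n F_0 s^{-n}$ supported on a bi-infinite sequence of disjoint dyadic intervals accumulating on an interior dyadic point from both sides. These copies commute and are cyclically permuted by $s$, so that $F$ contains a lamplighter $L = F^{(\ZZ)} \rtimes \langle s\rangle = F \wr \ZZ$ in which the \emph{lamp group is $F$ itself}. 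It is exactly this coincidence that should let the vanishing propagate from $L$ to $F$.

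First I would prove that $\hb^\bullet(A \wr \ZZ; E)$ vanishes in positive degrees for an arbitrary countable group $A$ and a wide class of coefficient modules $E$. The method is ergodic: rather than seek a contracting homotopy on inhomogeneous cochains (the Mather--Matsumoto--Morita displacement trick, which is unavailable here since $A$ need not be acyclic), I would compute bounded cohomology through a resolution by $L^\infty$-functions on an amenable $L$-space assembled from the lamp configuration space $A^{\ZZ}$ and the Bernoulli shift. The $\ZZ$-shift supplies a mixing, doubly ergodic direction along which one can push the support of cochains to infinity, and averaging against this shift annihilates cohomology classes. The breadth of admissible coefficients is not a luxury but a necessity, since the transfer below will feed $F$-modules back into a lamplighter.

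Finally I would transfer the vanishing from $L$ to $F$ via a factorization argument. Because $F_0 \le L \le F$, the restriction map $\hb^\bullet(F) \to \hb^\bullet(F_0)$ factors through $\hb^\bullet(L)$, which is zero; hence restriction to the lamp copy is the zero map. On the other hand, the canonical PL identification $I_0 \cong [0,1]$ realizes $F_0$ as a self-similar "zoomed" copy of $F$, giving a self-embedding $\iota\colon F \xrightarrow{\sim} F_0 \hookrightarrow F$, and if $\iota^{*} = \mathrm{id}$ on $\hb^\bullet(F)$ then restriction to $F_0$ is injective. Combining the two forces $\hb^\bullet(F) = 0$. The main obstacle is precisely this last point: bounded acyclicity does not pass to overgroups for free, so everything hinges on showing that the zoom self-embedding induces the identity (or at least an injection) in bounded cohomology. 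I would attack this through the commuting conjugates $F_n$, using that a bounded class is insensitive to conjugation pushed toward the accumulation point — the same swindle flavor that drives the ergodic step — so that the class can be averaged back to itself. Making the self-similar structure of $F$ interact correctly with the ergodic resolution, so that $s$ genuinely acts as a mixing transformation on the coefficient-twisted complex, is where the real work lies.
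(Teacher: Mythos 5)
Your two-step architecture --- ergodic vanishing for lamplighters $A\wr\ZZ$ via Zimmer-amenable Bernoulli spaces and multiple ergodicity, then a transfer to $F$ through self-similar copies supported on disjoint intervals --- is the same as the paper's, and your lamplighter step is essentially its \Cref{thm:sep}. Your factoring observation is also correct: restriction $\hb^\bullet(F)\to\hb^\bullet(F_0)$ factors through $\hb^\bullet(L)=0$, so it vanishes in positive degrees, and everything reduces to injectivity of that restriction. But that injectivity --- which you yourself flag as ``where the real work lies'' --- is a genuine gap, and the route you sketch to fill it would fail. Showing that the zoom embedding $\iota$ induces the identity by pushing conjugates toward the accumulation point and ``averaging back'' is precisely the Mather--Matsumoto--Morita style of swindle that the paper is designed to avoid: an infinite product of elements of $F$ supported on the intervals $I_n$ has infinitely many breakpoints and therefore leaves $F$, and the limiting ``conjugation'' is not by a group element, so bounded classes have no reason to be insensitive to it. Note also that the isomorphism $F_0\cong F$, which you present as the engine of the propagation, plays no role in the paper's argument.

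What fills the gap in the paper is co-amenability. One first passes to the derived subgroup $F'$; this loses nothing because $F'$ is normal with abelian quotient, hence co-amenable, and restriction to a co-amenable subgroup is injective on $\hb^\bullet(-,E)$ for dual $E$ (\Cref{prop:rest:co-a}). This reduction is forced: elements of $F$ with nontrivial germs at $0$ or $1$ can never be conjugated into a subgroup supported in a proper subinterval, whereas every finite subset of $F'$ can. That conjugation property implies, by a net-of-fixed-points compactness argument (\Cref{prop:co-a}), that the subgroup $G_0$ of elements supported in $Z_0$ is co-amenable in $F'$; hence so is $W_1=\langle G_0,g\rangle\supseteq G_0$, and restriction $\hb^\bullet(F',E)\to\hb^\bullet(W_1,E)$ is injective. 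Finally, $W_1$ is a quotient of the wreath product $G_0\wr\ZZ$ with metabelian, hence amenable, kernel, so inflation is an isomorphism and \Cref{thm:sep} gives the vanishing. Your skeleton can be completed exactly along these lines --- the injectivity you want is in fact true --- but the co-amenability mechanism (\Cref{prop:co-a} together with \Cref{prop:rest:co-a}) is the missing idea, and the swindle you propose cannot substitute for it.
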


This statement means that the bounded cohomology $\hb^n(F)$ vanishes for all $n>0$, where  $\hb^n(-)$ denotes the bounded cohomology (of Gromov~\cite{Gromov} and Johnson~\cite[\S2]{Johnson}) with coefficients in $\RR$ viewed as a trivial module. As for the group $F$, it appeared in many different contexts~\cite{Cannon-Floyd-Parry, Cannon-Floyd} since its 1965 definition by Thompson~\cite{Thompson_unpublished} but the main outstanding question seems to be whether it is amenable.

\medskip
One motivation for \Cref{thm:F:R} is that bounded acyclicity is a necessary condition for amenability, although far from sufficient. In fact, amenability is equivalent to the vanishing of $\hb^n(-,E)$ with coefficients in \emph{all} dual Banach modules $E$, see~\cite[Thm.~2.5]{Johnson}. In that context, \Cref{thm:F:R} is a special case of the following.

\begin{thm}\label{thm:F:sep}
The vanishing $\hb^n(F,E)=0$ holds for all $n>0$ and all separable dual Banach $F$-modules $E$.
\end{thm}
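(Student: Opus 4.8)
The plan is to deduce the statement from the bounded acyclicity of a smaller, more symmetric group, via two soft reductions and one genuinely new ergodic argument. First I would pass to the commutator subgroup. The abelianization of $F$ is $\ZZ^2$, detected by the two germ homomorphisms recording the $\log_2$ of the slopes at the endpoints $0$ and $1$; since elements of $F$ have only finitely many breakpoints, a trivial germ forces the map to be the identity on a neighborhood of the corresponding endpoint, so the kernel $[F,F]$ is exactly the group $F_c$ of dyadic PL homeomorphisms compactly supported in the open interval $(0,1)$. As $[F,F]$ is normal with amenable quotient $\ZZ^2$, it is co-amenable in $F$, and hence the restriction $\hb^n(F,E)\to\hb^n([F,F],E)$ is injective for every dual Banach module $E$. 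It therefore suffices to prove that $F_c=[F,F]$ is boundedly acyclic for all separable dual Banach modules.

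The point of this reduction is that $F_c$, unlike $F$ itself, displays manifest self-similarity: every finitely generated subgroup $H\le F_c$ is supported in a compact subinterval $[\epsilon,1-\epsilon]$, and one can choose elements of $F_c$ conjugating $H$ into pairwise disjoint subintervals accumulating at an endpoint. This produces, inside $F_c$, an infinite family of pairwise commuting conjugate copies of $H$, which is precisely the lamplighter-type configuration. The heart of the matter is then a vanishing theorem for such lamplighter structures, and the key step is to establish it by an \emph{ergodic} swindle rather than by the purely algebraic, Mather-type infinite repetition, which diverges in the bounded setting.

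Concretely, for the lamplighter vanishing I would use that $E$ is a separable dual module. Fixing a $G$-space $(S,\mu)$ on which the action is amenable, the complex of $G$-invariants in $L^\infty_{w*}(S^{\bullet+1},E)$ is a strong relatively injective resolution and hence computes $\hb^\bullet(G,E)$. Working in this complex, I would use the infinitely many commuting displacements to define a telescoping homotopy operator, pushing the support of a bounded cocycle off to infinity along the commuting copies and averaging. Boundedness keeps every partial telescope uniformly bounded, while separability of $E$ supplies weak-* sequential compactness of bounded balls in $L^\infty_{w*}(S,E)=(L^1(S,E_*))^*$, which is exactly what is needed to extract a $G$-equivariant limit and to verify the homotopy identity $\delta h+h\delta=\mathrm{id}$ in positive degrees. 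This gives $\hb^n(G,E)=0$ for the lamplighter, applies to $G=F_c$ with $E$ the restricted module, and then closes the argument through the co-amenable reduction above.

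I expect the main obstacle to be precisely the convergence and equivariance of this ergodic homotopy: the naive swindle is an infinite sum of coboundaries that does not converge in norm, so the entire weight of the proof rests on organizing the displacement as a weak-* limit that is simultaneously $G$-invariant, measurable in the $S$-variables, and compatible with the coboundary. Making this machinery function for arbitrary separable dual coefficients — rather than only trivial $\RR$ coefficients, where a softer argument would suffice — is the essential new input, and it is also what dictates the separability hypothesis in the statement.
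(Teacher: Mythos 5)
Your two outer reductions are sound and match the paper's strategy: passing to the co-amenable derived subgroup $[F,F]$ (so that restriction $\hb^n(F,E)\to\hb^n([F,F],E)$ is injective), and then exhibiting inside it a lamplighter-type configuration of commuting conjugate copies of a subgroup containing any given finite set. One caveat: for the reduction to a wreath product you need a \emph{single} subgroup $G_0$, co-amenable via conjugation of arbitrary finite subsets into it, together with a \emph{single} element $g$ whose powers displace $G_0$ to pairwise commuting conjugates; your per-subgroup formulation (each finitely generated $H$ separately conjugated into disjoint intervals) is a binate-style condition and would need to be reorganised into this uniform shape, which for $[F,F]$ is easy but should be said.

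The genuine gap is in the heart of the argument: the vanishing theorem for the lamplighter $G_0\wr\ZZ$ with separable dual coefficients. What you propose — a telescoping homotopy operator $h$ with $\delta h+h\delta=\mathrm{id}$, obtained as a weak-* limit of partial swindles that ``push the support of a cocycle to infinity'' — is not constructed, and you yourself flag its convergence and equivariance as the main obstacle; but this is precisely where the entire difficulty lies, and there are concrete reasons the plan as stated does not function. Cochains in $L^\infty_{w*}(S^{n+1},E)$ carry no notion of support along the group that could be displaced: a $G$-invariant cochain is unchanged by translation, so nothing is ``pushed''; and for the error terms of partial telescopes to vanish weak-* one needs exactly a decoupling (mixing) statement for translated cochains, i.e.\ the unproven core of your plan is the ergodic input itself. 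The paper's proof needs no homotopy operator at all: it chooses the amenable space to be tied to the lamplighter structure, namely the Bernoulli shift $Y=X^\ZZ$ over a Zimmer-amenable $G_0$-space $X$. Zimmer-amenability of the $W$-action on $Y$ is not automatic and is obtained by combining amenability of the $\bigoplus_\ZZ G_0$-action with co-amenability of this base in $W$ (this is a separate proposition in the paper). Then, since $Y^{2d}\cong(X^{2d})^\ZZ$ is again a Bernoulli shift, the $\ZZ$-shift on each $Y^d$ is ergodic \emph{with separable coefficients}, so every $W$-equivariant cochain in $L^\infty(Y^d,E)$ is essentially constant with value in $E^W$; the complex of invariants collapses to $E^W\to E^W\to\cdots$ with the alternating-sum differentials, which is exact in positive degrees. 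In short: instead of contracting the complex, one shows its invariant part is already the constant complex. Until you can actually produce your weak-* limit homotopy and verify $\delta h+h\delta=\mathrm{id}$ — something no known technique delivers in this generality — the proposal does not yet constitute a proof.
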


This is the first known example of acyclicity for such general coefficients --- except of course amenable groups.

However we caution the reader that this statement does not answer the amenability question. Indeed, our proof also works for many groups that are similar to $F$ but known to be non-amenable. For instance, the proof holds unchanged for all piecewise-projective groups introduced in~\cite{Monod_PNAS}.

In fact, we reduce \Cref{thm:F:sep} to a general result on ``lamplighter'' groups defined as (restricted) wreath products:

\begin{thm}\label{thm:sep}
Let $G$ be any group and consider the wreath product
\[
W = G \wr \ZZ = \Big( \bigoplus_{\ZZ} G \Big) \rtimes \ZZ.
\]
Then $\hb^n(W,E)$ vanishes for all $n>0$ and all separable dual Banach $W$-modules $E$.
\end{thm}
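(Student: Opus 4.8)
The plan is to compute $\hb^*(W,E)$ through an amenable action adapted to the decomposition $W=B\rtimes\ZZ$ with $B=\bigoplus_{\ZZ}G$, and then to contract the resulting complex using the self-similarity of the lamp configurations. The decisive feature is that amenability will be drawn \emph{only} from the top group $\ZZ$, never from $G$; this is what permits $G$ to be completely arbitrary, in particular non-amenable.

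First I would set up the resolution on the coset space $Y=W/\ZZ$. Every point stabilizer of $W\curvearrowright Y$ is a conjugate of $\ZZ$, and the stabilizer of an arbitrary tuple in $Y^{n+1}$ is an intersection of such conjugates, hence a subgroup of a copy of $\ZZ$ and so amenable. Therefore each coinduced module $\ell^\infty(Y^{n+1},E)$ is relatively injective, and the augmented complex $E\to\ell^\infty(Y^{\bullet+1},E)$, with the simplicial differential and the evident diagonal $W$-action, is a strong resolution (a bounded contracting homotopy is given by freezing one variable at a base coset). This identifies $\hb^*(W,E)$ with the cohomology of the invariants $C^\bullet:=\ell^\infty(Y^{\bullet+1},E)^W$. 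Choosing coset representatives inside $B$, I would read off $Y\cong B$ so that $B$ acts by left translation and the generator $t\in\ZZ$ acts by the shift automorphism of $B$.

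The heart of the matter is to contract $C^\bullet$ in positive degrees using the self-embedding that encodes self-similarity: with $P=\bigoplus_{n\ge0}G\le B$ one has $tPt\inv=\bigoplus_{n\ge1}G\subsetneq P$, so conjugation by high powers of $t$ pushes every lamp configuration arbitrarily far to the right. I would use this to build a homotopy $h\colon C^n\to C^{n-1}$ in the spirit of an infinite swindle: insert an extra, far-away variable and compare a cochain with its $t^k$-translate, exploiting that the original and shifted configurations are eventually supported on disjoint---hence commuting---blocks of lamps, so that their interaction splits off a coboundary. The naive telescoping swindle is unbounded, and this is repaired by averaging the construction over the displacement parameter $k$ against an invariant mean on $\ZZ$ (a F{\o}lner average), which is legitimate precisely because $\ZZ$ is amenable. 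The mean of a uniformly bounded family of $E$-valued cochains is taken by weak-$*$ integration in the dual module $E$; here separability of $E$ is used, to guarantee weak-$*$ measurability of everything in sight and hence that the average is a genuine bounded invariant cochain.

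The step I expect to be the main obstacle is exactly the construction and verification of this homotopy. One must arrange the swindle so that the inserted ``point at infinity'' is compatible with $W$-equivariance, so that the two pushed-apart copies genuinely commute at the level of cochains and the telescoping identity $dh+hd=\Id$ holds in every positive degree, and so that the $\ZZ$-averaging commutes with the simplicial differential while landing in the bounded invariants. Once the homotopy is shown to be bounded and $W$-equivariant it forces $H^n(C^\bullet)=0$, hence $\hb^n(W,E)=0$ for all $n\ge1$, with the arbitrariness of $G$ entering only through the bookkeeping of disjoint lamp supports.
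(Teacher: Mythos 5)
Your setup is sound, and it is the discrete counterpart of the paper's: $Y=W/\ZZ$ is a $W$-set all of whose tuple stabilizers are amenable (they are subgroups of conjugates of $\ZZ$), so $\ell^\infty(Y^{\bullet+1},E)$ is a strong resolution by relatively injective modules and $\hb^\bullet(W,E)$ is computed by the invariants $C^\bullet=\ell^\infty(Y^{\bullet+1},E)^W$, with $Y\cong B=\bigoplus_\ZZ G$, $B$ acting by translations and $t$ by the shift $\alpha$. But the entire content of the theorem is then the acyclicity of $C^\bullet$ in positive degrees, which is exactly the step you leave open (you say so yourself), and the mechanism you sketch for it fails a concrete test. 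Consider a homotopy of the proposed shape $h(f)(y_0,\dots,y_{n-1})=m_k\bigl[f(y_0,\dots,y_{n-1},\alpha^k(c))\bigr]$, where $m$ is an invariant mean on $\ZZ$ applied in the displacement variable $k$ and $c$ is a basepoint pushed to infinity. For $h(f)$ to lie in $C^{n-1}$ one needs, for every $b\in B$, the identity $m_k\bigl[f(y_\bullet,b^{-1}\alpha^k(c))\bigr]=m_k\bigl[f(y_\bullet,\alpha^k(c))\bigr]$, and this is false. Take trivial coefficients and $f(y_0,y_1)=\phi(y_0^{-1}y_1)$, where $\phi(x)$ is the parity of $|\mathrm{supp}(x)|$; this $\phi$ is shift-invariant, so $f\in C^1$, yet for all large $|k|$ the supports of $b^{-1}$ and $\alpha^k(c)$ are disjoint, so the two means differ by the parity of $|\mathrm{supp}(b)|$. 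Averaging over $k$ can never repair this, because the discrepancy is the \emph{same} for every large displacement; amenability of $\ZZ$ is not the issue at all.

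The obstruction is structural, not bookkeeping: on the discrete coset space the shift has an abundance of invariant bounded functions (any shift-invariant function of the lamp configuration, such as the support parity above), so no ergodicity-type statement holds there, and these are precisely the cochains a swindle-plus-averaging homotopy cannot see. This is why the paper abandons the discrete model and works measure-theoretically: it realises $\hb^\bullet(W,E)$ on the Bernoulli shift $Y=X^\ZZ$ over a probability space (Zimmer-amenability coming from the translation action of each lamp factor on itself, combined with co-amenability of $\bigoplus_\ZZ G$ in $W$), and then ergodicity with separable coefficients of the $\ZZ$-shift on every power $Y^d$ --- essentially Kolmogorov's zero-one law --- forces every $W$-equivariant class in $L^\infty(Y^d,E)$ to be essentially constant. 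No contracting homotopy is constructed at all: the invariant complex collapses degreewise to the constants, where acyclicity is immediate. Passing to a measure class is exactly what kills the invariant functions (support parity is a null phenomenon almost surely) that block your approach. As written, your proposal has a genuine gap at its central step, and the proposed repair is refuted by the example above.
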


In particular, the case of the trivial module $E=\RR$ shows that $W$ is boundedly acyclic, answering Question~1.8 in~\cite{Loeh_note17}. The vanishing statement of \Cref{thm:sep} fails if we drop either the separability or the duality assumption.

\bigskip

The connection between this general result for lamplighters and the particular case of Thompson's group $F$ comes from the fact that $F$ contains \emph{co-amenable} lamplighter subgroups. This condition will be further discussed in the proof, but for now we point out that this situation is by far not limited to $F$. Indeed, the following is not much more than a reformulation of \Cref{thm:sep}:

\begin{thm}\label{thm:abstract}
Let $G$ be a group and $G_0<G$ a co-amenable subgroup. Suppose that $G$ contains an element $g$ such that the conjugates of $G_0$ by $g^p$ and by $g^q$ commute for all $p\neq q$ in $\ZZ$.

Then $\hb^n(G,E)=0$ holds for all $n>0$ and all separable dual Banach $G$-modules $E$.
\end{thm}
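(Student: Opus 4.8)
The plan is to recognise the data $(G,G_0,g)$ as a homomorphic image of a lamplighter and then feed \Cref{thm:sep} through a co-amenability argument. Write $G_p = g^p G_0 g^{-p}$ for the conjugates. The hypothesis $[G_p,G_q]=1$ for $p\neq q$ is precisely the relation needed to define a homomorphism
\[
\pi\colon W = G_0 \wr \ZZ = \Big(\bigoplus_{\ZZ} G_0\Big)\rtimes \ZZ \lra G,
\]
sending the $p$-th coordinate copy of $G_0$ isomorphically onto $G_p$ and the generator of $\ZZ$ to $g$: conjugation by $g$ implements the shift, and the commuting hypothesis guarantees that the restricted direct sum maps homomorphically. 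Let $H = \pi(W) = \langle G_0, g\rangle \le G$ be the image.

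First I would handle the coefficients and the passage from $G$ to $H$. Since $G_0 \le H \le G$ and $G_0$ is co-amenable in $G$, co-amenability passes upward to the intermediate subgroup $H$ (push the invariant mean on $\ell^\infty(G/G_0)$ forward along $G/G_0 \to G/H$). Consequently, for the separable dual Banach $G$-module $E$, the restriction map $\hb^n(G,E)\to \hb^n(H,E)$ is injective, by the characterisation of co-amenability through injectivity of restriction in bounded cohomology with dual coefficients. It therefore suffices to prove $\hb^n(H,E)=0$ for all $n>0$.

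Next I would import the lamplighter vanishing. Viewing $E$ as a module over $W$ via $\pi$ gives a separable dual Banach $W$-module $\pi^*E$, so \Cref{thm:sep} yields $\hb^n(W,\pi^*E)=0$ for all $n>0$. To transfer this to the quotient $H=W/N$, where $N=\ker\pi$, I would use that $N$ acts trivially on $\pi^*E$, so that $(\pi^*E)^N = E$, and analyse the inflation map $\hb^n(H,E)\to\hb^n(W,\pi^*E)$ through the Hochschild--Serre machinery: inflation is injective once $N$ is boundedly acyclic with these coefficients, and then the vanishing on $W$ forces $\hb^n(H,E)=0$, which combined with the injection of the previous step gives $\hb^n(G,E)=0$.

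The main obstacle is exactly this last transfer, namely controlling the kernel $N$ of the lamplighter presentation. The subgroups $G_p$ need not generate their restricted direct product inside $G$, so $\pi$ need not be injective and $N$ can be nontrivial. When the $G_p$ are in general position ($N$ trivial), or more generally when $N$ is amenable, inflation is an isomorphism by Gromov's mapping theorem and the conclusion is immediate; this is the sense in which the statement is ``not much more than a reformulation'' of \Cref{thm:sep}. In the general case I expect the crux to be verifying bounded acyclicity of $N$: one observes that each pairwise intersection $G_p\cap G_q$ is abelian, since any two of its elements lie in the commuting subgroups $G_p$ and $G_q$, and the relations defining $N$ are assembled from such abelian overlaps. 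The technical heart is to leverage this to kill the relevant incoming differentials to the bottom row of the spectral sequence, or, failing a clean structural statement, to re-run the ergodic argument behind \Cref{thm:sep} directly on the commuting-conjugate structure of $H$, which never used that the base was a genuine direct sum.
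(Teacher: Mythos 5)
Your skeleton coincides with the paper's proof: pass to $H=\langle G_0,g\rangle$ (the paper's $W_1$), use co-amenability of $H$ in $G$ together with \Cref{prop:rest:co-a} to reduce to showing vanishing for $H$, pull $E$ back along the surjection $\pi\colon W=G_0\wr\ZZ\to H$, invoke \Cref{thm:sep}, and conclude via injectivity of inflation. But where the paper closes the argument, you leave the decisive step open: you prove the theorem only under the additional hypothesis that $N=\ker\pi$ is amenable, and in the general case you merely \emph{expect} that bounded acyclicity of $N$ can be verified, offering either a spectral-sequence sketch (delicate in bounded cohomology, where no well-behaved Hochschild--Serre machinery is available in general) or a re-run of the ergodic argument directly on $H$. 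The latter does not go through as stated: the Zimmer-amenability established in \Cref{s-sec:wreath} rests on the direct-sum structure of the base via \Cref{cor:ZA}, and the $W$-action on the Bernoulli space $Y$ does not factor through $H$ when $N$ is nontrivial. Since the theorem carries no hypothesis whatsoever on $N$, this is a genuine gap.

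The missing fact is that $N$ is \emph{always} metabelian, hence amenable, so that Noskov's theorem (or \cite[Rem.~8.5.4]{Monod}) makes the inflation an isomorphism unconditionally. To see this, let $D=\bigoplus_{\ZZ}G_0$ be the base of $W$ and take $(h_p)_p\in N\cap D$, so that $\prod_p g^p h_p g^{-p}=1$ in $G$. Fixing $q$ and solving for the $q$-th factor, $g^q h_q g^{-q}$ is a product of elements of the groups $G_p$ with $p\neq q$, each of which commutes with all of $G_q$; hence $g^q h_q g^{-q}$ lies in the centre of $G_q$. Consequently any two elements of $N\cap D$ commute coordinate by coordinate, i.e.\ $N\cap D$ is abelian, while $N/(N\cap D)$ embeds into $W/D\cong\ZZ$. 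Thus $N$ is abelian-by-abelian. Your observation that the intersections $G_p\cap G_q$ are abelian points in this direction but is not the statement you need; what matters is that the coordinates of kernel elements are forced into the centres of the conjugates. With this one lemma supplied, your outline becomes the paper's proof verbatim.
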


Ignoring for a moment the definition of co-amenability, this result can be applied under general algebraic conditions that we think of as a form of self-similarity:

\begin{cor}\label{cor:alg}
Let $G$ be a group and $G_0<G$ a subgroup with the following two properties:
\begin{enumerate}[(i)]
\item every finite subset of $G$ is contained in some $G$-conjugate of $G_0$,\label{pt:alg:co-a}
\item $G$ contains an element $g$ such that the conjugates of $G_0$ by $g^p$ and by $g^q$ commute for all $p\neq q$ in $\ZZ$. \label{pt:alg:com}
\end{enumerate}
Then $\hb^n(G,E)=0$ holds for all $n>0$ and all separable dual Banach $G$-modules $E$.

This conclusion holds more generally if \eqref{pt:alg:co-a} is required only for finite subsets of the derived subgroup $G'$ of $G$, or of any (fixed) higher derived subgroup $G^{(k)}$ of $G$.
\end{cor}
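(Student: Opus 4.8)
The plan is to deduce \Cref{cor:alg} directly from \Cref{thm:abstract}. Hypothesis~\eqref{pt:alg:com} is verbatim the commuting-conjugates hypothesis of \Cref{thm:abstract}, so the only work is to show that the combinatorial condition~\eqref{pt:alg:co-a} forces $G_0$ to be co-amenable in $G$. For this I would invoke the classical fixed-point characterization of co-amenability (Eymard; see also Monod--Popa): the subgroup $G_0$ is co-amenable in $G$ if and only if every continuous affine $G$-action on a non-empty compact convex subset $K$ of a locally convex space that admits a $G_0$-fixed point also admits a $G$-fixed point.

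So let $K$ be such a $G$-space and let $x_0 \in K$ be fixed by $G_0$. For each finite subset $F \subseteq G$, condition~\eqref{pt:alg:co-a} furnishes $s = s_F \in G$ with $s^{-1} F s \subseteq G_0$; then every element of $F$ fixes the point $s_F x_0 \in K$. Ordering the finite subsets of $G$ by inclusion, the net $(s_F x_0)_F$ lies in the compact set $K$ and hence has a convergent subnet, with limit $y$ say. Given $g \in G$, every sufficiently large $F$ contains $g$, so the relation $g\,(s_F x_0) = s_F x_0$ holds eventually along the subnet; passing to the limit gives $g y = y$ by continuity. Thus $y$ is a $G$-fixed point. (Convexity plays no role here; only compactness is used.) This proves that $G_0$ is co-amenable, and \Cref{thm:abstract} then yields the first assertion.

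For the final, more general statement, assume \eqref{pt:alg:co-a} only for finite subsets of $G^{(k)}$. Running the same net argument with $F$ ranging over the finite subsets of $G^{(k)}$ --- the conjugators $s_F$ are still allowed to be arbitrary elements of $G$, and the action is by all of $G$, so each $s_F x_0$ is defined --- produces a point of $K$ fixed by the whole of $G^{(k)}$. The fixed set $K^{G^{(k)}}$ is non-empty, compact and convex, and it is $G$-invariant because $G^{(k)}$ is normal (indeed characteristic) in $G$. The residual $G$-action on $K^{G^{(k)}}$ is trivial on $G^{(k)}$, hence factors through $G/G^{(k)}$, which is solvable and therefore amenable; by the fixed-point property of amenable groups it has a fixed point in $K^{G^{(k)}} \subseteq K$. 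This is a $G$-fixed point, so once more $G_0$ is co-amenable in $G$ and \Cref{thm:abstract} applies.

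I expect the only genuinely delicate point to be this last step: the conjugation argument alone yields invariance under $G^{(k)}$ rather than under $G$, because the conjugators need not lie in $G^{(k)}$, and this must be patched with the amenability of the solvable quotient $G/G^{(k)}$. Checking that $K^{G^{(k)}}$ is a non-empty $G$-invariant compact convex set --- so that the amenable-quotient fixed-point property genuinely applies --- is where the normality of $G^{(k)}$ and the affine structure of the action are both needed; everything else is a routine compactness argument.
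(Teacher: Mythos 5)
Your proposal is correct and follows essentially the same route as the paper: the paper likewise deduces the statement from \Cref{thm:abstract} by proving co-amenability of $G_0$ via exactly your net-of-conjugated-fixed-points argument (its \Cref{prop:co-a}), with the derived-subgroup case handled through the co-amenability of $G^{(k)}$. The only cosmetic difference is that where the paper abstractly invokes that $G^{(k)}$ is co-amenable in $G$ (being normal with soluble quotient), you inline that fact by applying the amenable fixed-point property of $G/G^{(k)}$ to the $G$-invariant compact convex set $K^{G^{(k)}}$.
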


Beyond $F$, these conditions are satisfied by many generalisations of this group, including all non-amenable groups introduced in~\cite{Monod_PNAS} and their subgroups studied in~\cite{Lodha-Moore}.

\medskip
We obtain perhaps more intuitive conditions with the following special case, stated for groups appearing as transformations of some underlying space. Recall that a transformation is said to be \emph{supported} on a subset if it is the identity outside that subset.

\begin{cor}\label{cor:action}
Let $G$ be a group acting faithfully on a set $Z$. Suppose that $Z$ contains a subset $Z_0$ and that $G$ contains an element $g\in G$ with the following properties:

\begin{enumerate}[(i)]
\item every finite subset of $G$ can be conjugated so that all its elements are supported in $Z_0$;
\item $g^p(Z_0)$ is disjoint from $Z_0$ for every integer $p\geq 1$.
\end{enumerate}

\noindent
Then $\hb^n(G,E)=0$ holds for all $n>0$ and all separable dual Banach $G$-modules $E$. In particular, $G$ is boundedly acyclic.
\end{cor}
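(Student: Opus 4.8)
The plan is to deduce \Cref{cor:action} from \Cref{cor:alg} by exhibiting a suitable subgroup $G_0$ and reusing the given element $g$. First I would define $G_0<G$ to be the subgroup consisting of all elements of $G$ that are supported in $Z_0$, i.e.\ that fix $Z\setminus Z_0$ pointwise; this is plainly a subgroup. Since the action is faithful, I regard $G$ as a subgroup of $\Sym(Z)$, so that the notion of support and the usual bookkeeping are available. The whole proof then reduces to checking the two hypotheses of \Cref{cor:alg} for this $G_0$ and this $g$.

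Condition~(i) of \Cref{cor:alg} is essentially immediate from hypothesis~(i) of \Cref{cor:action}. Given a finite subset $\Phi\se G$, by assumption there is $t\in G$ with all elements of $t\Phi t\inv$ supported in $Z_0$, that is $t\Phi t\inv\se G_0$; equivalently $\Phi\se t\inv G_0 t$. Hence every finite subset of $G$ lies in some $G$-conjugate of $G_0$, which is exactly what condition~(i) demands.

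For condition~(ii) I would first observe that the translates $g^p(Z_0)$, $p\in\ZZ$, are pairwise disjoint. Indeed, applying $g^{-p}$ shows that $g^p(Z_0)\cap g^q(Z_0)=\varnothing$ is equivalent to $Z_0\cap g^{q-p}(Z_0)=\varnothing$, and for $p\neq q$ the integer $m=q-p$ is nonzero: if $m\geq1$ this is hypothesis~(ii) directly, and if $m\leq-1$ one applies $g^{-m}$ and invokes hypothesis~(ii) for the positive power $-m$. Next I would record the elementary fact that two transformations with disjoint supports commute in $\Sym(Z)$: each transformation preserves its own support setwise (if $a(z)\neq z$ then $a\inv$ also moves $a(z)$), so a short case analysis according to whether a point lies in one support, the other, or neither shows that the two compositions agree. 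Since $g^p G_0 g^{-p}$ consists of elements supported in $g^p(Z_0)$, the disjointness above gives that $g^p G_0 g^{-p}$ and $g^q G_0 g^{-q}$ commute elementwise for $p\neq q$. This is precisely condition~(ii).

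With both hypotheses verified, \Cref{cor:alg} yields $\hb^n(G,E)=0$ for every $n>0$ and every separable dual Banach $G$-module $E$, and specialising to the trivial module $E=\RR$ gives bounded acyclicity. The only point requiring any real care --- and the closest thing to an obstacle --- is the role of faithfulness: without it, disjoint supports would force only the \emph{images} in $\Sym(Z)$ to commute, not the group elements themselves, so I would want to state explicitly that $G\hookrightarrow\Sym(Z)$ and carry out the support bookkeeping inside $\Sym(Z)$, after which the commutation descends to a genuine identity in $G$.
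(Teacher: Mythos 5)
Your proposal is correct and follows essentially the same route as the paper: define $G_0$ as the subgroup of elements supported in $Z_0$, deduce condition~(i) of \Cref{cor:alg} from the conjugation hypothesis, upgrade the disjointness of $g^p(Z_0)$ from $Z_0$ ($p\geq 1$) to pairwise disjointness of all translates, and use faithfulness so that disjoint supports force the conjugates $g^pG_0g^{-p}$ and $g^qG_0g^{-q}$ to commute in $G$ itself. The only difference is that you spell out the support bookkeeping and the role of faithfulness in more detail than the paper does, which is fine.
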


Many classical ``large'' transformation groups satisfy these conditions, starting with the group of compactly supported homeomorphisms of $\RR^n$ which was proved to be boundedly acyclic by Matsumoto--Morita~\cite{Matsumoto-Morita}; the latter result was recently widely generalised in~\cite{Loeh_note17} and~\cite{FFLM_binate_draft}. 

\medskip
\Cref{cor:action} has the following advantage in comparison with the strategy behind the classical Matsumoto--Morita~\cite{Matsumoto-Morita} theorem and its generalisations, a strategy rooted in Mather's acyclicity theorem for ordinary (co)homology~\cite{Mather71}. Namely, the latter relies ultimately on pasting together \emph{infinitely many} compactly supported  transformation (which Berrick describes as occasionally ``difficult to substantiate''~\cite[3.1.6]{Berrick02}). In our approach, only finitely many elements need to be pasted together at any one time, according to the definition of lamplighters, recalling the Aristotelian distinction between actual and potential infinity.

This makes it possible to apply \Cref{cor:action} to less flexible situations, such as diffeomorphisms or PL homeomorphisms, see~\cite{Monod-Nariman_arxv2}. By contrast, in ordinary cohomology, the acyclicity of these less flexible groups is often not true or unresolved. We refer to \cite{Monod-Nariman_arxv2} for applications.

In this context, we mention that Kotschick introduced in~\cite{Kotschick08} a commuting conjugates condition which resembles one half of our hypothesis in \Cref{cor:action} and hence holds more generally. He used it to prove the vanishing of the stable commutator length~\cite{Calegari_scl}, which according to Bavard's result~\cite{Bavard91} follows from the vanishing of $\hb^2(-,\RR)$. There is no reason, however, that Kotschick's groups should all be boundedly acyclic or satisfy the vanishing with coefficients. Similar comments hold for the very recent ``commuting conjugates'' criterion for the vanishing of $\hb^2(-,\RR)$ given in~\cite{FFLodha_1_draft}.

\begin{acks*}
I am indebted to S.~Nariman for rekindling my interest in the question of bounded acyclicity. I am grateful to F.~Fournier-Facio, C.~L{\"o}h and M.~Moraschini for showing me their preprint~\cite{FFLM_binate_draft}, for referring me to Grigorchuk's question and for many comments.
\end{acks*}

\section{Lamplighters}\label{sec:}
We shall first work towards the proof of \Cref{thm:sep} for the case of a \emph{countable} group $G$. This restriction will be lifted in Section~\ref{sec:infinity}.

\subsection{Ergodicity}\label{s-sec:erg}
In order to handle non-trivial coefficients, we recall the notion of \emph{ergodicity with coefficients} introduced with M.~Burger in~\cite{Burger-Monod3}. The reader only interested in bounded acyclicity can skip this discussion.

Consider a group $G$ with a non-singular action on a standard probability space $X$. Recall that usual ergodicity is equivalent to the statement that every $G$-invariant measurable function (class) $f\colon X\to\RR$ is (essentially) constant. There is no difference if instead $f\colon X\to E$ ranges in a separable Banach space, or indeed any polish space. However, if $E$ is endowed with a non-trivial $G$-representation, then the requirement that every $G$-\emph{equivariant} measurable function class $f\colon X\to E$ be essentially constant is much stronger. This is called ergodicity with coefficients in $E$. A standard fact is that ergodicity with coefficients in \emph{separable} Banach modules follows from the ergodicity of the diagonal action on $X^2$. It is important here to recall that Banach modules are always assumed to be endowed with an \emph{isometric} $G$-representation. The proof consists in observing that $f\colon X\to E$ must satisfy that $\| f(x) - f(x') \|$ is essentially constant, and that this constant must be zero because $E$ is second countable.

One of the earliest examples of an ergodic action is the \emph{Bernoulli shift}, defined as follows. Let $X$ be a standard probability space and consider the countable power $Y=X^\ZZ$. Then the shift of coordinates is an ergodic $\ZZ$-action on $Y$; this is essentially the same statement as Kolmogorov's zero-one law. It follows that the diagonal action on any power $Y^d$ is also ergodic, because $Y^d$ can be identified with $(X^d)^\ZZ$ in a $\ZZ$-equivariant manner. Considering $2d$ instead of $d$, we can upgrade this to ergodicity with separable coefficients.

\subsection{Amenable actions and co-amenable subgroups}
In the proof of \Cref{thm:sep}, we shall use the terminology of \emph{amenable actions in Zimmer's sense} in the context of a countable group with a non-singular action on a standard probability space. We refer to~\cite{Zimmer78b} or to~\cite[\S4]{Zimmer84} for the definition and recall the following basic facts.

\begin{lem}\label{lem:ZA}
(i)~Consider Zimmer-amenable actions of $G_i$ on $X_i$ for $i=1,2$. Then the product action of $G_1\times G_2$ on $X_1\times X_2$ is Zimmer-amenable.

(ii)~Suppose that $G$ is the union of an increasing sequence of subgroups $G_n<G$, $n\in \NN$. A non-singular action of $G$ is Zimmer-amenable if the corresponding action of every $G_n$ is Zimmer-amenable.
\end{lem}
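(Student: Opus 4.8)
The plan is to use Zimmer's fixed-point characterisation of amenability (see~\cite{Zimmer78b}): the action of a countable group $H$ on a standard probability space $(X,\mu)$ is Zimmer-amenable precisely when, for every separable Banach space $E$, every Borel cocycle $\alpha\colon H\times X\to\Isom(E^*)$ into the isometries of the dual (with the weak-* topology), and every $\alpha$-equivariant measurable field $x\mapsto A_x$ of non-empty convex weak-* compact subsets of the unit ball of $E^*$, there is a measurable $H$-invariant section $s$ with $s(x)\in A_x$ for a.e.\ $x$. Throughout I work inside $L^\infty_{w*}(X,E^*)=L^1(X,E)^*$; since $E$ is separable and $X$ is standard, $L^1(X,E)$ is separable, so the unit ball of this dual is weak-* compact (Banach--Alaoglu) and metrisable. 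The set $\mathcal S$ of sections valued in $A_\bullet$ is then a weak-* compact convex subset.

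For part (ii), fix such data $(E,\alpha,A_\bullet)$ for the $G$-action and let $S_n\subseteq\mathcal S$ be the subset of $G_n$-invariant sections. Each $S_n$ is weak-* closed and convex, hence weak-* compact, and it is non-empty by the Zimmer-amenability of the $G_n$-action (applied to the restriction $\alpha|_{G_n\times X}$, for which $A_\bullet$ is still equivariant). Because the $G_n$ increase, a $G_{n+1}$-invariant section is \emph{a fortiori} $G_n$-invariant, so the $S_n$ form a decreasing sequence of non-empty compact sets. By the finite intersection property $\bigcap_n S_n\neq\varnothing$, and any element of this intersection is invariant under every $G_n$, hence under $G=\bigcup_n G_n$. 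This produces the required invariant section.

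For part (i), I would freeze one factor at a time. Given data $(E,\alpha,A_\bullet)$ for the $G_1\times G_2$-action on $X_1\times X_2$, first restrict $\alpha$ to $\{1\}\times G_2$ and regard $X_1$ as a parameter: for a.e.\ fixed $x_1$ this is a $G_2$-cocycle on $X_2$ with equivariant field $A_{(x_1,\cdot)}$, so the Zimmer-amenability of the $G_2$-action on $X_2$ yields $G_2$-invariant sections, which a measurable selection assembles into a new field $B_{(x_1,x_2)}$ consisting of the $G_2$-fixed points of $A_{(x_1,x_2)}$; this $B_\bullet$ is again non-empty, convex and weak-* compact. Since $G_1$ and $G_2$ commute and $\alpha$ is a cocycle, $B_\bullet$ is $\alpha|_{G_1\times\{1\}}$-equivariant, so applying the Zimmer-amenability of the $G_1$-action on $X_1$ (now with $X_2$ as a parameter) produces a $G_1$-invariant section valued in $B_\bullet$. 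Such a section is invariant under both $G_1$ and $G_2$, hence under $G_1\times G_2$.

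The main obstacle is the parametrised step in (i): passing from the fibrewise existence of $G_2$-invariant sections to a single globally measurable one, and checking that the field $B_\bullet$ of fixed points is measurable with non-empty compact convex fibres. This is exactly where a Borel measurable-selection argument (in the spirit of von Neumann--Jankov) enters, and it is cleanest to isolate it as the ``parameter principle'' that Zimmer-amenability of an $H$-action on $X$ implies Zimmer-amenability of the induced $H$-action on $X\times Z$; granting this principle, both the selection and the equivariance of $B_\bullet$ follow routinely. Part (ii), by contrast, is essentially just the compactness argument above, the only care being the measurability of $A_\bullet$ and the weak-* closedness of the invariance conditions.
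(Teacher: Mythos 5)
Part (ii) of your proposal is correct and complete: the set of sections valued in $A_\bullet$ is a weak-* compact convex subset of the unit ball of $L^1(X,E)^*$, the invariance condition for each group element is weak-* closed (these maps are adjoints of isometries of $L^1(X,E)$), and the nested nonempty compact convex sets $S_n$ have a common point, which is $G$-invariant since $G=\bigcup_n G_n$. This is exactly the ``follows readily from Zimmer's original definition'' argument that the paper leaves implicit; its own proof is a two-line citation.

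Part (i), however, has a genuine gap, and it sits exactly where you locate the ``main obstacle'' --- but your proposed fix does not close it. First, a conceptual slip: a field $B_{(x_1,x_2)}$ of ``$G_2$-fixed points of $A_{(x_1,x_2)}$'' is not meaningful, because the cocycle action of $g_2\in G_2$ does not act on the single fibre $A_{(x_1,x_2)}$; it maps it to $A_{(x_1,g_2x_2)}$. What your two-step argument actually requires is a field over $X_1$ \emph{alone}, whose fibre $B_{x_1}$ is the set of $G_2$-equivariant sections of $A_{(x_1,\cdot)}$, sitting inside the dual of the separable space $L^1(X_2,E)$, together with the derived $G_1$-cocycle with values in $\mathrm{Is}\big(L^1(X_2,E)^*\big)$ and the identification $L^\infty_{w*}(X_1\times X_2,E^*)\cong L^\infty_{w*}\big(X_1,L^1(X_2,E)^*\big)$. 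Second, to invoke the Zimmer-amenability of the $G_1$-action on $X_1$ for this field, you must verify that $B_\bullet$ is a \emph{measurable} field with almost everywhere nonempty fibres. Your ``parameter principle'' (amenability of the $H$-action on $X$ passes to $X\times Z$ with trivial action on $Z$) is a true statement, but it is unproven in your write-up and is essentially of the same nature as the parametrised selection problem you are trying to bypass; moreover, even granted, it only delivers the a.e.\ nonemptiness of $B_{x_1}$ (by slicing a global $G_2$-invariant section), not the measurability of the field $B_\bullet$ --- which is the actual technical content, requiring a Castaing-type representation of the Borel set $\{(x_1,s):s\in B_{x_1}\}$ before Zimmer's definition can be applied. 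So ``the rest follows routinely'' overstates the situation. This is precisely why the paper also offers the alternative route through the characterisation in \cite[Thm.~A(v)]{Adams-Elliott-Giordano}: in an equivariant-mean (conditional expectation) formulation, part (i) becomes genuinely soft, since such maps can be combined across a product and accumulated along an increasing union with no measurable selection at all.
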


\begin{proof}
Both statements follow readily from Zimmer's original definition. They also both follow from the equivalent characterisation given in~\cite[Thm.~A(v)]{Adams-Elliott-Giordano}.
\end{proof}

\begin{cor}\label{cor:ZA}
For each integer $n$, let $G_n$ be a countable group with a Zimmer-amenable non-singular action on a standard probability space $X_n$. Then the action of the restricted product $\bigoplus_n G_n$ on the (unrestricted) product $\prod_n X_n$ is a Zimmer-amenable non-singular action on a standard probability space.
\end{cor}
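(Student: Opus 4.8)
The plan is to present $\prod_n X_n$ as a standard probability space on which $\bigoplus_n G_n$ acts non-singularly, and then to deduce Zimmer-amenability purely formally from the two parts of \Cref{lem:ZA}: the product statement (i) will take care of each finite sub-product of the $G_n$, and the increasing-union statement (ii) will pass from these to the whole restricted product. I would begin with the measure-theoretic preliminaries, which are routine: a countable product of standard probability spaces, equipped with the product measure $\mu=\bigotimes_n\mu_n$, is again a standard probability space, and any element of $\bigoplus_n G_n$ moves only finitely many coordinates, each non-singularly, so it acts non-singularly on $(\prod_n X_n,\mu)$.

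Next I would set up the exhaustion. For a finite set $F$ of indices write $H_F=\prod_{n\in F}G_n$, regarded as a subgroup of $\bigoplus_n G_n$ acting on the corresponding coordinates and fixing the others. Choosing an increasing exhaustion $F_1\subseteq F_2\subseteq\cdots$ of the index set by finite sets realises $\bigoplus_n G_n=\bigcup_N H_{F_N}$ as an increasing sequence of subgroups, since every element of the restricted product has finite support. The heart of the argument is to show that each $H_F$ acts Zimmer-amenably on the \emph{full} product $\prod_n X_n$. By induction on $|F|$, applying \Cref{lem:ZA}(i) repeatedly, the product action of $H_F$ on $\prod_{n\in F}X_n$ is Zimmer-amenable.

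To upgrade this to the full product I would split $\prod_n X_n=\big(\prod_{n\in F}X_n\big)\times\big(\prod_{n\notin F}X_n\big)$ and invoke \Cref{lem:ZA}(i) a final time, now letting the second factor carry the action of the trivial group $\{1\}$. Since the trivial group is amenable, every one of its non-singular actions is Zimmer-amenable, so this factor is an admissible input; the resulting product action of $H_F\times\{1\}$ on $\prod_n X_n$ is exactly the action of $H_F$, which is therefore Zimmer-amenable. With each $H_{F_N}$ acting Zimmer-amenably on $\prod_n X_n$, \Cref{lem:ZA}(ii) applied to $\bigoplus_n G_n=\bigcup_N H_{F_N}$ delivers the conclusion.

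I expect the only genuine subtlety — the main obstacle — to be exactly this absorption of the infinitely many untouched coordinates: a naive use of \Cref{lem:ZA}(i) yields amenability on $\prod_{n\in F}X_n$ rather than on the whole space, and the two parts of \Cref{lem:ZA} do not directly apply to the full product without this intermediate step. The trivial-group trick above circumvents it cleanly, relying on the permanence fact that amenable groups act Zimmer-amenably on every space; the remaining verifications (that the extra factor preserves standardness and non-singularity) are immediate.
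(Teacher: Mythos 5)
Your proof is correct and follows the same route as the paper, which likewise views $\bigoplus_n G_n$ as an increasing union of finite products and combines the two parts of \Cref{lem:ZA}. The trivial-group trick for absorbing the untouched coordinates is a detail the paper leaves implicit, and you handle it correctly.
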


\begin{proof}
  Viewing $\bigoplus_n G_n$ as an increasing union of finite products, this follows from combining the two statements of \Cref{lem:ZA}. Note that the action is non-singular since any given group element acts on only finitely many coordinates.
\end{proof}

We now observe that there is a nice interplay between Zimmer-amenability and Eymard's notion of co-amenability~\cite{Eymard72}. Recall that a subgroup $H<G$ is \emph{co-amenable in $G$} if there is a $G$-invariant mean on $G/H$; another equivalent condition is recalled in the proof below.

\begin{prop}\label{prop:co-ZA}
Let $G$ be a countable group with a non-singular action on a standard probability space $X$. Let $H<G$ be a co-amenable subgroup. If the corresponding $H$-action on $X$ is Zimmer-amenable, then so is the $G$-action.
\end{prop}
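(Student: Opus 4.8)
The plan is to prove Proposition~\ref{prop:co-ZA} by transporting the Zimmer-amenability of the $H$-action along the co-amenability of $H$ in $G$. The key tool is the characterization of co-amenability via fixed points: $H<G$ is co-amenable if and only if every continuous affine $G$-action on a convex compact set that admits an $H$-fixed point also admits a $G$-fixed point (this is the ``equivalent condition recalled in the proof'' alluded to in the statement). I would first recall Zimmer's definition of amenability in the form that makes fibrewise fixed points visible: an action of $G$ on $X$ is Zimmer-amenable precisely when every measurable affine $G$-bundle over $X$ with nonempty convex compact fibres admits a measurable $G$-invariant section. Equivalently, writing the cocycle-theoretic formulation, for every measurable field $\{K_x\}_{x\in X}$ of convex compact sets on which $G$ acts compatibly over the action on $X$, there is an invariant measurable section.

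\medskip\noindent
The heart of the argument is then the following reduction. Suppose $\{K_x\}_{x\in X}$ is such a $G$-equivariant field of convex weak-$*$ compact sets over $X$, and we wish to produce a $G$-invariant section. By hypothesis the $H$-action on $X$ is Zimmer-amenable, so the same field, viewed only as an $H$-field, admits an $H$-invariant measurable section $s\colon X\to \bigsqcup_x K_x$. Now I would consider the space $S$ of \emph{all} measurable $H$-invariant sections of the field; this is naturally a convex set, it is nonempty by the previous sentence, and it carries a $G$-action (since $G$ normalizes nothing in general, one must check that $g\cdot s$ is $g H g^{-1}$-invariant, so the correct object is the space of sections as an $H$-space under the residual $G/\!\sim$ structure). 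To make this clean I would instead package the data as a single continuous affine $G$-action on a convex compact set and invoke the fixed-point characterization of co-amenability directly.

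\medskip\noindent
Concretely, the cleanest route I foresee is: realize Zimmer-amenability of the $H$-action as the existence of an $H$-equivariant conditional expectation, or equivalently an $H$-invariant ``mean-valued'' section, and then average over $G/H$ using the co-amenability mean. That is, given the $G$-field $\{K_x\}$, the set $\mathcal{M}$ of $H$-invariant sections valued in the $K_x$ is a nonempty convex weak-$*$ compact $G$-set (compactness coming from the fact that each $K_x$ is compact and sections form a closed subset of an $L^\infty$-type ball, using separability/standardness of $X$ to get the needed compactness in an appropriate topology). Co-amenability of $H$ then supplies a $G$-fixed point in $\mathcal{M}$, which is exactly a $G$-invariant section of the original field, establishing Zimmer-amenability of the $G$-action.

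\medskip\noindent
The main obstacle I anticipate is the topological bookkeeping in the middle step: one must equip the space of $H$-invariant sections with a topology in which it is simultaneously convex, compact, and supports a \emph{continuous} affine $G$-action, so that the abstract fixed-point form of co-amenability applies. The compactness is delicate because sections live in an infinite-dimensional space; here the standing assumptions (standard probability space, and the fibres $K_x$ being convex weak-$*$ compact, hence the section space being a closed bounded convex subset of the dual of an $L^1$-type space) should yield weak-$*$ compactness, but verifying that the $G$-action is continuous and affine in that topology, and that passing to $H$-invariants preserves nonemptiness and compactness, is where the real care is needed. Once that framework is in place, the co-amenability fixed-point property finishes the proof immediately.
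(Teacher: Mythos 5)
Your overall strategy---reformulating both co-amenability and Zimmer-amenability as fixed-point properties for affine actions on convex compact sets, then transporting the $H$-fixed point to a $G$-fixed point---is exactly the approach of the paper's proof. However, your concrete final step contains a genuine error, and it is one you yourself flagged and then failed to avoid: in the last paragraph you apply the co-amenability fixed-point property to the set $\mathcal{M}$ of \emph{$H$-invariant} sections, asserting that it is a convex compact \emph{$G$-set}. It is not: as you correctly observed in your second paragraph, if $s$ is $H$-invariant then $g\cdot s$ is only $gHg^{-1}$-invariant, so $\mathcal{M}$ is not preserved by the $G$-action unless $H$ is normal. Moreover the logical shape of the application is off: the fixed-point form of co-amenability takes as input a convex compact $G$-set \emph{containing an $H$-fixed point} and outputs a $G$-fixed point in that same set; it cannot be invoked on a set that is not $G$-invariant in the first place.

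The repair is precisely the packaging you announced at the end of your second paragraph and then abandoned: apply co-amenability to the space $K=L^\infty(X,K_\bullet)$ of \emph{all} measurable sections of the field, not just the $H$-invariant ones. This $K$ is a nonempty convex compact $G$-set in the weak-* topology, where the $G$-action combines the action on $X$ with the cocycle action on the fibres (compare~\cite[4.3.1]{Zimmer84}); the topological bookkeeping you worry about is part of the standard setup of Zimmer's definition rather than something needing a new argument, and for countable $G$ one may take the ambient space to be the dual of a separable Banach space. A point of $K$ is $H$-fixed exactly when it is an $H$-invariant (cocycle-equivariant) section, so Zimmer-amenability of the $H$-action furnishes an $H$-fixed point in $K$; co-amenability of $H$ in $G$ then yields a $G$-fixed point in $K$, which is exactly a $G$-equivariant section, i.e.\ Zimmer-amenability of the $G$-action. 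With this one correction your argument coincides with the paper's proof.
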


\begin{proof}
Recall that a group $G$ is amenable if and only if every convex compact $G$-set $K\neq \varnothing$ (in a Hausdorff locally convex space) has a $G$-fixed point; we can moreover assume that the ambient space is the dual of a Banach $G$-module in the weak-* topology, and for countable $G$ it suffices to consider duals of a separable spaces.

More generally, a subgroup $H<G$ is co-amenable if this condition holds for the subclass of those G-sets $K$ which have an $H$-fixed point.

Finally, Zimmer's definition of the amenability of the $G$-action on $X$ is the fixed-point property for the following specific class of $G$-sets $K$. Start with a measurable \emph{field} of convex compact $K_x\neq \varnothing$ ($x\in X$), but with a cocycle action over $G\times X$ instead of a $G$-action. Then the requirement of Zimmer's definition is a $G$-fixed point in the convex compact $G$-set $K=L^\infty(X, K_\bullet)$ of measurable sections. Since the $G$-action on $K$ combines the $G$-action on $X$ with the cocycle action on  $\{K_x\}$, a section is $G$-fixed exactly when it is cocycle-equivariant as a map; compare~\cite[4.3.1]{Zimmer84}.

Combining these reformulations of the definitions yields the statement of the proposition.
\end{proof}

Since we just recalled the definition(s) of co-amenability, we can see how this notion is relevant to condition~\eqref{pt:alg:co-a} in \Cref{cor:alg}:

\begin{prop}\label{prop:co-a}
Let $G$ be any group and $G_0<G$ a subgroup.

If every finite subset of $G$ is contained in some conjugate of $G_0$, then $G_0$ is co-amenable in $G$.

This holds more generally if every finite subset of a fixed co-amenable subgroup $G_1<G$ is contained in some $G$-conjugate of $G_0$.
\end{prop}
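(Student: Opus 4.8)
The plan is to use the fixed-point reformulation of co-amenability recalled in the proof of \Cref{prop:co-ZA}: a subgroup $H<G$ is co-amenable exactly when every non-empty convex compact $G$-set $K$ (in a Hausdorff locally convex space, with $G$ acting by affine homeomorphisms) that carries an $H$-fixed point also carries a $G$-fixed point. So I would fix such a $K$ together with a $G_0$-fixed point $k_0\in K$ and manufacture a $G$-fixed point out of it.

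The key device is an averaging net indexed by the finite subsets of $G$, directed by inclusion. For each finite $F\se G$ the hypothesis supplies an element $g_F\in G$ with $F\se g_F G_0 g_F\inv$, and I set $k_F := g_F k_0$. A one-line computation shows that $k_F$ is fixed by every $\gamma\in F$: writing $\gamma=g_F h g_F\inv$ with $h\in G_0$ and using $h k_0=k_0$ gives $\gamma k_F = g_F h k_0 = k_F$. By compactness the net $(k_F)$ admits a subnet converging to some $k_\infty\in K$. For a fixed $\gamma\in G$, the indices $F$ containing $\gamma$ are cofinal, so along the subnet one eventually has $\gamma k_F = k_F$; passing to the limit and using continuity of the action of $\gamma$ yields $\gamma k_\infty = k_\infty$. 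As $\gamma$ was arbitrary, $k_\infty$ is the desired $G$-fixed point, which establishes the first assertion.

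For the more general statement I would run exactly the same net argument but index it by the finite subsets of the co-amenable subgroup $G_1$ (still choosing $g_F\in G$ with $F\se g_F G_0 g_F\inv$). This produces a $G_1$-fixed point $k_\infty\in K$. Now the co-amenability of $G_1$ in $G$, expressed through the very same fixed-point criterion, upgrades this $G_1$-fixed point to a $G$-fixed point. Hence any convex compact $G$-set with a $G_0$-fixed point has a $G$-fixed point, which is precisely co-amenability of $G_0$ in $G$; the first assertion is recovered as the special case $G_1=G$.

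The only point requiring care is the bookkeeping of the net and its subnet, namely ensuring that ``eventually $\gamma\in F$'' survives the passage to a subnet; this holds because a subnet is indexed cofinally. I expect no genuine obstacle here, the entire content being the elementary conjugation identity together with compactness. The role of the hypothesis is exactly to make each finite ``piece'' of $G$ (or of $G_1$) fixable, by translating the given $G_0$-fixed point through a suitable conjugating element.
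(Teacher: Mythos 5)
Your proof is correct and follows essentially the same route as the paper: the same net $(g_F k_0)$ indexed by finite subsets of $G_1$ (or $G$), the same conjugation computation showing $g_F k_0$ is fixed by $\langle F\rangle$, a compactness accumulation point fixed by $G_1$, and the fixed-point characterisation of co-amenability of $G_1$ to upgrade to a $G$-fixed point. The only difference is cosmetic — you spell out the subnet bookkeeping that the paper leaves implicit.
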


The more general form above is relevant to the additional statement in \Cref{cor:alg} by setting $G_1=G^{(k)}$; indeed $G^{(k)}$ is co-amenable in $G$ since it is normal with soluble quotient.

\begin{proof}[Proof of \Cref{prop:co-a}]
Let $K$ be a convex compact $G$-set containing some $G_0$-fixed point $k$. We need to prove that $K$ has a $G$-fixed point, but it suffices to show that it has a $G_1$-fixed point since the latter is co-amenable in $G$. Given any finite subset $F$ of $G_1$, let $g_F\in G$ be an element conjugating $F$ into $G_0$. Then $g_F k$ is fixed by the group generated by $F$. Consider $g_F k$ as a net indexed by the directed set of all finite subsets $F$ of $G_1$. Then any accumulation point of this net in the compact space $K$ will be fixed by $G_1$.
\end{proof}

The connection between co-amenability and bounded cohomology is the following basic fact. A proof can be found e.g. in~\cite[Prop.~8.6.6]{Monod} and actually it provides a characterisation of co-amenability, see~\cite[Prop.~3]{Monod-Popa}. (There is a countability assumption in~\cite[Prop.~8.6.6]{Monod} but it is not needed nor used.)

\begin{prop}\label{prop:rest:co-a}
Let $G$ be a group, $G_0<G$ a co-amenable subgroup and $E$ a dual Banach $G$-module. Then the restriction map
\[
\hb^\bullet(G, E) \lra \hb^\bullet(G_0, E)
\]
is injective.\qed
\end{prop}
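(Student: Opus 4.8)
The plan is to split the restriction map by an explicit transfer (averaging) map built from a $G$-invariant mean, which co-amenability provides. First I would compute both sides through the homogeneous bar complex: $\hb^\bullet(G,E)$ is the cohomology of the $G$-equivariant bounded cochains $L^\infty(G^{\bullet+1},E)^G$, and $\hb^\bullet(G_0,E)$ is computed by the same cochain spaces with only $G_0$-equivariance imposed. In this model the restriction map is simply the inclusion $L^\infty(G^{\bullet+1},E)^G \hookrightarrow L^\infty(G^{\bullet+1},E)^{G_0}$, so it is enough to produce a chain map splitting it.

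To define the transfer, I would average over $G/G_0$. The one computation to record is that, for a $G_0$-equivariant cochain $f$, the $E$-valued expression $g\cdot f(g^{-1}x_0,\dots,g^{-1}x_n)$ depends only on the coset $gG_0$; this is immediate from the $G_0$-equivariance of $f$. Fixing a $G$-invariant mean $m$ on $G/G_0$ --- whose existence is exactly the co-amenability of $G_0$ --- I would set $Tf$ to be the integral of this coset-function against $m$. Invariance of $m$ under the left $G$-action on $G/G_0$ then makes $Tf$ a $G$-equivariant cochain, and when $f$ is already $G$-equivariant the integrand is the constant $f$, so $Tf=f$; hence $T$ is a left inverse of restriction and the restriction map is injective in every degree.

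The one genuine obstacle --- and the point where the hypotheses are used --- is integrating an $E$-valued function against a mean, which is only finitely additive. This is precisely what the dual Banach structure is for: writing $E$ with predual $E_*$, I would define $Tf$ through its pairings $\langle Tf(x_0,\dots,x_n),\varphi\rangle = \int_{G/G_0}\langle g\cdot f(g^{-1}x_0,\dots,g^{-1}x_n),\varphi\rangle\,dm(gG_0)$ for $\varphi\in E_*$, the integrand now being a bounded scalar function on $G/G_0$. One then checks that this weak-$*$ integral indeed defines an element of $E$ with $\|Tf\|_\infty\le\|f\|_\infty$, that the assignment $(x_0,\dots,x_n)\mapsto Tf(x_0,\dots,x_n)$ is a bona fide cochain in $L^\infty(G^{\bullet+1},E)$, and that $T$ commutes with the bar differential; these are routine once the pairing is in place, since the differential is coordinatewise and the averaging is uniform. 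Note that this argument uses co-amenability only in the form of an invariant mean on $G/G_0$, so no countability hypothesis on $G$ is needed --- consistent with the remark following the statement.
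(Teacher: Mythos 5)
Your proof is correct. The paper itself offers no argument for this proposition --- it is stated with a \qed and attributed to \cite[Prop.~8.6.6]{Monod} and \cite[Prop.~3]{Monod-Popa} --- and your cochain-level transfer is essentially the standard argument behind those citations, so the two routes differ only in packaging. The cited proof works at the level of coefficients: it uses the induction isomorphism to identify the restriction $\hb^\bullet(G,E)\lra\hb^\bullet(G_0,E)$ with the map induced by the inclusion of constants $E\to\ell^\infty(G/G_0,E)$, and then splits that inclusion by weak-* integration against the $G$-invariant mean. You instead perform the same averaging directly on the homogeneous bar complex, building a chain-level left inverse $T$ of the inclusion $\ell^\infty(G^{\bullet+1},E)^G\hookrightarrow\ell^\infty(G^{\bullet+1},E)^{G_0}$. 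Both hinge on exactly the two points you isolate: the $G_0$-coset invariance of $g\mapsto g\cdot f(g^{-1}x_0,\dots,g^{-1}x_n)$, and the fact that the dual structure of $E$ (isometric action on a predual $E_*$, with $E$ carrying the adjoint action) is what allows integration against a merely finitely additive mean, with $\|Tf\|\le\|f\|$ and $G$-equivariance of $Tf$ following from invariance of the mean. The one step you assert rather than prove --- that $\ell^\infty(G^{\bullet+1},E)^{G_0}$ computes $\hb^\bullet(G_0,E)$ with restriction realised by inclusion of invariants --- is standard: $G$ is a free $G_0$-set, so the bar resolution of $G$ is also a strong resolution by relatively injective $G_0$-modules; and since all groups here are discrete, the $L^\infty$ spaces are just bounded functions and no measurability questions arise. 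Your closing remark is also on point: the argument uses only the invariant mean, so no countability is needed, matching the paper's parenthetical comment about \cite[Prop.~8.6.6]{Monod}.
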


\subsection{Vanishing for countable wreath products}\label{s-sec:wreath}
Let $G$ be a countable group, $W=G \wr \ZZ$ the (restricted) wreath product and $E$ a separable dual $W$-module.

Let further $X$ be any standard probability space with a non-singular $G$-action which is amenable in Zimmer's sense. One can for instance simply take $X$ to be $G$ itself, endowed with any distribution of full support. The countable power $Y=X^\ZZ$ is a standard probability space with a non-singular action of $\bigoplus_{n\in \ZZ} G$ which is Zimmer-amenable by \Cref{cor:ZA}. We let $\ZZ$ act on $Y$ by shifting the coordinates; that is, $Y$ is the $X$-based Bernoulli shift. Combining the two actions, we have thus endowed $Y$ with an action of the (restricted) wreath product $W=G \wr \ZZ$.

The subgroup $\bigoplus_{n\in \ZZ} G$ is co-amenable in $W$ since it is a normal subgroup with amenable quotient. Therefore, the above discussion allows us to apply \Cref{prop:co-ZA} and conclude that the $W$-action on $Y$ is Zimmer-amenable.

According to~\cite[Thm.~2]{Burger-Monod3} or to~\cite[Thm.~7.5.3]{Monod}, the bounded cohomology of $W$ with coefficient in $E$ is realised by the complex of $W$-equivariant measurable bounded function classes
\[
0 \lra L^\infty(Y, E)^W \lra L^\infty (Y^2, E)^W  \lra L^\infty (Y^3, E)^W \lra \cdots
\]
with the usual ``simplicial'' Alexander--Kolmogorov--Spanier differentials. (The general references above specify that measurability is in the weak-* sense, but this is irrelevant here, see~\cite[Lem.~3.3.3]{Monod}.)

The discussion of Section~\ref{s-sec:erg} shows that the $\ZZ$-equivariant elements of  $L^\infty (Y^d, E)$ are essentially constant, and hence so is every element of $L^\infty (Y^d, E)^W$. It follows that the latter space consists of all essentially constant maps ranging in $E^W$. Since the simplicial differentials are alternating sums of the map omitting each variable, we conclude that the above complex of $W$-equivariant maps is acyclic except possibly in degree zero, where its cohomology is $E^W$. This completes the proof of \Cref{thm:sep} when $G$ is countable.

\subsection{To \texorpdfstring{$\aleph_0$}{infinity} and beyond}\label{sec:infinity}
We imposed a countability assumption on our groups in order to be able to apply standard ergodic methods straight out of the shipping box. There does not appear to be a deeper reason that countability should be needed. In any case, the following general principle will allow us to reduce \Cref{thm:sep} to the countable case.

\begin{prop}\label{prop:red:countable}
Let $G$ be any group and $E$ a separable dual Banach $G$-module. Suppose that every countable subset of $G$ is contained in a subgroup $G_1 < G$ such that $\hb^n(G_1, E)$ vanishes for all $n>0$.

Then $\hb^n(G, E)$ vanishes for all $n>0$.
\end{prop}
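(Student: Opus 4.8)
The plan is to compute $\hb^\bullet(G,E)$ through the homogeneous bar complex $\big(\ell^\infty(G^{\bullet+1},E)^G, d\big)$ of bounded $G$-equivariant cochains with the simplicial differential, and, starting from an arbitrary cocycle $c\in\ell^\infty(G^{n+1},E)^G$ with $n\geq 1$, to manufacture a bounded equivariant primitive $b$ with $db=c$. The argument has two ingredients: a \emph{uniform} bound on primitives over all countable subgroups, and a weak-* compactness (ultralimit) step that glues these local primitives into a global one. I would first observe that every countable subgroup $C<G$, being a countable subset, lies in some $G_1<G$ with $\hb^\bullet(G_1,E)$ vanishing in positive degrees; hence $c|_{G_1}$ is an equivariant coboundary and, restricting a primitive from $G_1$ to $C$, the quantity $N(C):=\inf\{\|b\|_\infty : b\in\ell^\infty(C^n,E)^C,\ db=c|_C\}$ is finite for every countable $C$.

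The step I expect to be the main obstacle is upgrading this pointwise finiteness to a uniform bound $\sup_C N(C)<\infty$, since the family of subgroups $G_1$ supplied by the hypothesis may be large and uncontrolled, so the open mapping constants attached to the individual $G_1$ could a priori blow up. The clean way around this is a contradiction argument exploiting that a countable union of countable subgroups is still countable. If $N(C_j)\to\infty$ for countable subgroups $C_j$, put $C_\infty:=\langle\bigcup_j C_j\rangle$, which is again countable; a single equivariant primitive $b_\infty$ for $c|_{C_\infty}$ restricts, for each $j$, to an equivariant primitive of $c|_{C_j}$ of norm at most $\|b_\infty\|_\infty$, forcing $N(C_j)\leq\|b_\infty\|_\infty$ for all $j$ and contradicting $N(C_j)\to\infty$. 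Thus $\sup_C N(C)=:K_0<\infty$, and choosing for each $C$ a primitive $b_C\in\ell^\infty(C^n,E)^C$ with $db_C=c|_C$ and $\|b_C\|_\infty\leq K_0+1=:K$ gives the sought uniform family.

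Finally I would assemble the global primitive. Order the countable subgroups of $G$ by inclusion (a directed set, since the subgroup generated by two countable subgroups is countable) and fix an ultrafilter $\mathcal U$ refining the tail filter. Using that closed balls of the dual space $E$ are weak-* compact (Banach--Alaoglu), define for each tuple $\bar g\in G^{n}$ the value $b(\bar g):=\lim_{\mathcal U} b_C(\bar g)$, the weak-* limit being taken over the tail (hence $\mathcal U$-large) set of $C$ containing the coordinates of $\bar g$; it lies in the ball of radius $K$. Since the identities $db_C=c|_C$ and $b_C(h\bar g)=h\cdot b_C(\bar g)$ are finite relations holding for all sufficiently large $C$, and since each $h\in G$ acts weak-* continuously on the dual module $E$, passing to the ultralimit yields $db=c$ together with full $G$-equivariance. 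Hence $b\in\ell^\infty(G^{n},E)^G$ is a bounded equivariant primitive of $c$, so $[c]=0$; as $c$ was arbitrary, $\hb^n(G,E)=0$ for all $n>0$. I note in passing that this reduction uses only the weak-* compactness of balls and the weak-* continuity of the action, not the separability of $E$, although separability is of course part of the running hypotheses.
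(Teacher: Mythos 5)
Your proposal is correct, but it takes a genuinely different route from the paper. The paper dualises: it passes to $\ell^1$-homology with coefficients in the predual $F$ of $E$, invokes the Johnson/Matsumoto--Morita closed-range equivalence (vanishing of $\hb^n(J,E)$ for all $n>0$ is equivalent to vanishing of $\hh^{\ell^1}_n(J,F)$ for all $n>0$ \emph{together with} $\hh^{\ell^1}_0(J,F)$ being Hausdorff), and then disposes of the Hausdorff condition by proving $\hb^1(J,E)=0$ outright via a Ryll-Nardzewski-type fixed point theorem from Bourbaki --- which is precisely where the separability of $E$ enters. On the homology side the reduction to countable subgroups is then almost invisible: an $\ell^1$-cycle has countable support, hence lies in some countable subgroup, hence in some $G_1$, where it bounds. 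Your argument instead never leaves the homogeneous complex $\ell^\infty(G^{\bullet+1},E)^G$; its two points of substance are the uniform bound $\sup_C N(C)<\infty$, obtained by the countable-union contradiction trick (this is where you, exactly like the paper, exploit that a countable union of countable subgroups is countable --- the statement would fail with ``finite'' in place of ``countable''), and the gluing of the chosen primitives $b_C$ by a weak-* ultralimit, which needs only Banach--Alaoglu plus the weak-* continuity of the contragredient action of each group element. What each approach buys: the paper's proof is very short given the duality machinery it can cite; yours is self-contained, avoids $\ell^1$-homology and the fixed point theorem entirely, and --- as you observe --- uses no separability of $E$ anywhere in the reduction, so it proves the formally stronger statement in which $E$ is an arbitrary dual Banach $G$-module and separability (or anything else) is needed only to verify the hypothesis on the subgroups $G_1$. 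Your explicit constant $K_0$ also makes the reduction quantitative, in the spirit of ``uniform'' bounded acyclicity, which the duality argument does not exhibit.
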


Since the classical examples of boundedly acyclic groups were precisely large, uncountable, groups, we single out the following particular case of \Cref{prop:red:countable}.

\begin{cor}\label{cor:red:countable}
Let $G$ be any group. Suppose that every countable subset of $G$ is contained in some boundedly acyclic subgroup of $G$.

Then $G$ is boundedly acyclic.\qed
\end{cor}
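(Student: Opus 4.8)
The plan is to sketch the proof of Proposition~\ref{prop:red:countable}, from which \Cref{cor:red:countable} is the special case $E=\RR$. The idea is to kill a class by manufacturing a \emph{single} bounded primitive on all of $G$, obtained as a weak-* limit of primitives that live on countable subgroups. I would work throughout with the inhomogeneous bounded cochain complex $C^\bullet_b(G,E)$ computing $\hb^\bullet(G,E)$, and freely replace a ``countable subset'' by the (still countable) subgroup it generates. The starting observation is that the hypothesis supplies, for each countable subgroup $H<G$, a subgroup $G_1$ with $H<G_1$ and $\hb^n(G_1,E)=0$; since the restriction $\hb^n(G,E)\to\hb^n(H,E)$ factors as $\hb^n(G,E)\to\hb^n(G_1,E)\to\hb^n(H,E)$ through a vanishing group, it is zero. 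Concretely: if $c$ is a cocycle representing a class in $\hb^n(G,E)$ with $n>0$, then $c|_H$ is a coboundary on $H$ for every countable $H$.

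First I would produce primitives of controlled norm on each countable subgroup. Fix $H$ and a good overgroup $G_1$. As $\hb^n(G_1,E)=0$, the image of $d^{n-1}$ equals the closed kernel of $d^n$ on $G_1$, so the induced continuous bijection $C^{n-1}_b(G_1,E)/\ker d^{n-1}\to\ker d^n$ is an isomorphism by the open mapping theorem. Hence $c|_{G_1}$ has a primitive of norm at most $C(G_1)\,\|c\|$, whose restriction is a primitive $b_H$ of $c|_H$ with $\|b_H\|\le C(G_1)\,\|c\|<\infty$; in particular the infimal primitive norm $\|c|_H\|_{\mathrm{prim}}$ is finite for every countable $H$.

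The crux is to upgrade this to a bound that is \emph{uniform} in $H$, and here countability is used decisively. If $\sup_H\|c|_H\|_{\mathrm{prim}}=\infty$, pick $H_k$ with $\|c|_{H_k}\|_{\mathrm{prim}}\ge k$; the subgroup $H_\infty$ generated by $\bigcup_k H_k$ is still countable, hence contained in some good $G_1$, and one primitive of $c|_{G_1}$ restricts simultaneously to all $H_k$, forcing $\|c|_{H_k}\|_{\mathrm{prim}}\le\|c|_{G_1}\|_{\mathrm{prim}}<\infty$ for every $k$ --- a contradiction. So there is a finite $C$ admitting, for every countable $H$, a primitive $b_H$ of $c|_H$ with $\|b_H\|\le C$. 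This is precisely the step that fails for arbitrary directed unions, where bounded-cohomological vanishing is not inherited: the open mapping theorem together with the fact that only countably many ``bad'' witnesses can be amalgamated at once is what rescues the argument, and I expect it to be the main obstacle.

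Finally I would assemble a global primitive by a weak-* limit. The countable subgroups form a directed poset, and since $E$ is the dual of a separable space the ball of radius $C$ is weak-* compact and metrizable. Fixing an ultrafilter $\mathcal U$ on this poset that refines the order filter, I would set $b(\bar g):=\lim_{\mathcal U}b_H(\bar g)$ in the weak-* topology for each tuple $\bar g$; this is well defined because each fixed $\bar g$ eventually lies in $H$ and the values stay in one weak-* compact ball. Then $\|b\|\le C$, and because the coboundary relation and the isometric (hence weak-* continuous) module action involve only finitely many coordinates, passing to the limit gives $db=c$. Thus the class vanishes, which proves the proposition and, taking $E=\RR$, \Cref{cor:red:countable}; the weak-* limit here is routine once the uniform bound is in hand.
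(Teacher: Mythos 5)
Your proposal is correct, and it takes a genuinely different route from the paper. The paper proves \Cref{prop:red:countable} (hence \Cref{cor:red:countable}) by dualising: using Johnson's closed-range duality~\cite[Cor.~1.3]{Johnson} and~\cite[Cor.~2.4]{Matsumoto-Morita}, the vanishing of $\hb^n(J,E)$ for all $n>0$ is shown to be equivalent to the vanishing of the $\ell^1$-homology $\hh^{\ell^1}_n(J,F)$ for all $n>0$, where $E=F^*$ (for the Corollary itself, with $E=\RR$ trivial, this is exactly~\cite[Cor.~2.4(iii)]{Matsumoto-Morita}); the extra Hausdorff condition on $\hh^{\ell^1}_0$ that arises for non-trivial coefficients is disposed of by proving $\hb^1(J,E)=0$ outright via a Ryll-Nardzewski-type fixed point theorem (Bourbaki's case~(c) in~\cite{BourbakiTVS}), which is where separability enters the paper's argument. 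Vanishing then transfers from countable subgroups to $G$ on the homology side, essentially because an $\ell^1$-chain has countable support. You instead stay entirely on the cochain side: restriction of a cocycle to each countable subgroup is a coboundary; a \emph{uniform} bound on primitive norms is extracted by amalgamating countably many bad witnesses $H_k$ into one countable subgroup (this is indeed the crux, and your contradiction argument is airtight --- note that you do not even need the open mapping theorem, since the mere existence of one bounded primitive on the good overgroup $G_1$ already caps all the $\|c|_{H_k}\|_{\mathrm{prim}}$); and a weak-* ultrafilter limit along the directed poset of countable subgroups assembles a global primitive, the limit commuting with the finitely many terms of the coboundary because the contragredient action on a dual module is weak-* continuous. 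Comparing what each buys: the paper's route leans on established Banach-space duality machinery and makes the countable-to-global transfer transparent (supports of chains), whereas yours is self-contained, avoids $\ell^1$-homology and Ryll-Nardzewski altogether, and in fact appears to prove the \emph{stronger} statement in which $E$ is an arbitrary dual Banach module: Alaoglu compactness of balls and weak-* continuity of the dual action are all you really use, so neither separability nor the metrizability you invoke is needed for the ultrafilter limit. The only cosmetic slips are that the infimum defining $\|c|_H\|_{\mathrm{prim}}$ need not be attained (take primitives of norm at most $C+1$), and that the metrizability remark is superfluous.
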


\begin{proof}[Proof of \Cref{prop:red:countable}]
Consider a group $J$ and a separable Banach $J$-module $E$ which is the dual of some Banach $J$-module $F$. The key claim is that $\hb^n(J, E)$ vanishes for all $n>0$ if and only if the $\ell^1$-homology $\hh^{\ell^1}_n(J,F)$ vanishes for all $n>0$. This fact then implies the proposition when we apply it to both $G$ and $G_1$, viewing $E$ also as a dual Banach $G_1$-module.

The claim is established as~\cite[Cor.~2.4(iii)]{Matsumoto-Morita} in the special case of $E$ trivial, which is sufficient for \Cref{cor:red:countable}.

In the general case, we recall first that $\hb^n(J, E)$ vanishes for all $n>0$ if and only if the following two conditions hold: first, $\hh^{\ell^1}_n(J,F)$ vanishes for all $n>0$; second, $\hh^{\ell^1}_0(J,F)$ is Hausdorff. This equivalence relies on the closed range theorem and was already established by Johnson~\cite[Cor.~1.3]{Johnson}, see also~\cite[Cor.~2.4(i),(ii)]{Matsumoto-Morita}. Moreover, the second condition is equivalent to $\hb^1(J, E)$ being Hausdorff, see~\cite[Thm.~2.3]{Matsumoto-Morita}

In conclusion, it certainly suffices to prove $\hb^1(J, E)=0$. This amounts to showing that every affine isometric action with bounded orbits on a separable dual Banach space admits a fixed point. That statement is a variant of the Ryll-Nardzewski theorem. Specifically, it is the case (c) in Bourbaki's account, Appendix~3 to part~IV of~\cite{BourbakiTVS}. Warning: the English translation incorrectly requires first countability for the norm of $E$, which is both empty and insufficient, whereas the proof and the French original correctly use second countability, which here is just the separability of $E$ that we assumed.
\end{proof}

\begin{proof}[End of proof of \Cref{thm:sep}]
Let $G$ by an arbitrary group and $S\se G\wr \ZZ$ a countable subset of the wreath product. Then there is a countable subgroup $G_1$ of $G$ such that $G_1\wr \ZZ$ contains $S$. In consequence, \Cref{prop:red:countable} allows us to reduce the general case to the case of wreath products of countable groups.
\end{proof}

\section{Self-similar situations}
\begin{flushright}
\begin{minipage}[t]{0.85\linewidth}\itshape\small
\begin{flushright}
It is shaped, sir, like itself; and it is as broad as it hath breadth: it is just so high as it is, and moves with its own organs
\end{flushright}
\begin{flushright}
\upshape\small
Shakespeare, \emph{Antony and Cleopatra}, Act 2, Scene 7
\end{flushright}
\end{minipage}
\end{flushright}

\medskip
We now consider the situation where a group contains a suitable supply of copies of a given co-amenable subgroup; in particular the case of Corollaries~\ref{cor:alg} and~\ref{cor:action} where the replicating subgroup also progressively swallows the entire ambient group.

\begin{proof}[Proof of \Cref{thm:abstract}.]
Let $G$ be a group, $G_0<G$ a co-amenable subgroup and suppose that $G$ contains an element $g$ such that the conjugates of $G_0$ by $g^p$ and by $g^q$ commute for all $p\neq q$ in $\ZZ$. Let further $E$ be a separable dual Banach $G$-module.

Let $W_1<G$ be the subgroup generated by $G_0$ and $g$. Then $W_1$ is co-amenable in $G$ since it contains $G_0$. Therefore, by \Cref{prop:rest:co-a}, it suffices to show that $\hb^n(W_1,E)$ vanishes for all $n>0$. The commutativity assumption implies that there is a natural map from the wreath product $W = G_0 \wr \ZZ$ onto $W_1$ mapping $1\in \ZZ$ to $g$. This turns $E$ into a $W$-module and \Cref{thm:sep} implies the vanishing of $\hb^n(W,E)$. It only remains to justify that the inflation
\[
\hb^n(W_1,E) \lra \hb^n(W,E)
\]
is injective. By construction, the kernel of the projection $W \to W_1$ is metabelian and hence amenable. This implies that the inflation is an isomorphism (see~\cite[Thm.~1]{Noskov} or~\cite[Rem.~8.5.4]{Monod}) and thus \Cref{thm:abstract} follows.
\end{proof}

\begin{proof}[Proof of \Cref{cor:alg}]
Let $G$ be a group, $E$ a separable dual Banach $G$-module and $G_0<G$ a subgroup. To prove the general case of \Cref{cor:alg}, we can assume the following.

\begin{enumerate}[(i)]
\item Every finite subset of a (fixed) co-amenable subgroup $G_1<G$ is contained in some $G$-conjugate of $G_0$.\label{pt:alg:co-a:proof}
\item $G$ contains an element $g$ such that the conjugates of $G_0$ by $g^p$ and by $g^q$ commute for all $p\neq q$ in $\ZZ$.\label{pt:alg:com:proof}
\end{enumerate}

\noindent
In view of \Cref{prop:co-a}, the first condition implies that $G_0$ is co-amenable in $G$. Therefore, we are in a position to apply \Cref{thm:abstract} and hence \Cref{cor:alg} is established.
\end{proof}

\begin{proof}[Proof of \Cref{cor:action}]
We are given a group $G$ acting faithfully on a set $Z$, an element $g\in G$ and a subset $Z_0\se Z$ such that:

\begin{enumerate}[(i)]
\item every finite subset of $G$ can be conjugated so that all its elements are supported in $Z_0$;
\item $g^p(Z_0)$ is disjoint from $Z_0$ for every integer $p\geq 1$.
\end{enumerate}

Define $G_0<G$ to be the subgroup consisting of all elements of $G$ supported in $Z_0$. The first assumption implies that every finite subset of $G$ is contained in some conjugate of $G_0$. Next, note that the sets $g^p(Z_0)$ and $g^q(Z_0)$ are disjoint for all distinct $p,q\in \ZZ$. Since $G$ acts faithfully, this implies that the conjugates of $G_0$ by $g^p$ and by $g^q$ commute. Thus \Cref{cor:action} follows indeed from \Cref{cor:alg}.
\end{proof}

We can now apply this result to the case of Thompson's group $F$. Since this group has a number of very different descriptions linking it to interesting objects in homotopy, algebra and combinatorics, we should specify which description of $F$ we work with. We consider $F$ to be the group of piecewise affine homeomorphisms of $[0,1]$ with dyadic breakpoints and slopes in $2^\ZZ$. We refer to~\cite{Cannon-Floyd-Parry} for background.

\begin{proof}[Proof of \Cref{thm:F:sep} and hence also of \Cref{thm:F:R}]
We work with the derived subgroup $F'$ of $F$, which is sufficient by \Cref{prop:rest:co-a}. Choose a non-trivial element $g\in F'$ and choose a dyadic point $x_0 \in (0,1)$ not fixed by $g$. Let $Z$ be the $F'$-orbit of $x_0$ (which happens to consist of all dyadic points of $(0,1)$). Define $Z_0$ to be the open interval determined by $x_0$ and $g(x_0)$, which is non-empty by construction.

The first condition of \Cref{cor:action} is satisfied by the transitivity properties of the $F'$-action: any interval stricly contained in $(0,1)$ can be shrunk into $Z_0$ (compare e.g.\ the proof of Cor.~2.3 in~\cite{Caprace-Monod_discrete}). The second condition follows from the fact that $g$ preserves the order on $(0,1)$. In conclusion, we are in a situation to appeal to \Cref{cor:action} and thus complete the proof of \Cref{thm:F:sep} and hence also of \Cref{thm:F:R}.
\end{proof}

(The reader might notice that this reasoning, when brought all the way back to the underlying wreath product subgroup of $F$, is an improvement of our comments in Section~6.C of~\cite{MonodVT}.)

\medskip
The above argument holds for many similar groups since it only relies on the abstract statement of \Cref{cor:action}; as mentioned in the introduction, this includes all piecewise-projective groups of homeomorphisms of the line that have sufficiently transitive orbits.

\section{Further comments}\label{sec:further}
\subsection{More acyclicity}
First, we should recall that many examples of boundedly acyclic groups (with trivial coefficients) have been discovered, starting with the theorem of Matsumoto--Morita~\cite{Matsumoto-Morita}. Recent examples include, among others, \cite{Loeh_note17}, \cite{FFLM_binate_draft}, \cite{FFLM_2106.13567}, \cite{Monod-Nariman_arxv2}. A very nice general criterion, but for degree two only, is given in~\cite{FFLodha_1_draft}.

\medskip

Furthermore, boundedly acyclic groups can be used as a tool in results aiming to determine \emph{non-trivial} bounded cohomology of larger groups. This has recently led to the complete computation of the bounded cohomology of some groups that are not boundedly acyclic~\cite{Monod-Nariman_arxv2}. These methods, or the methods of~\cite[\S6]{FFLM_binate_draft}, can now leverage \Cref{thm:F:R} above to establish that the bounded cohomology of Thompson's circle group $T$ is generated by the bounded Euler class, in perfect analogy to the result of~\cite{Monod-Nariman_arxv2} about the entire group of (orientation-preserving) homeomorphisms of the circle. By contrast, the usual cohomology of $T$ (and of $F$) is richer and completely described in~\cite{Ghys-Sergiescu}.

\subsection{More lamplighters}
In the proof of the vanishing for wreath products, the only properties of $\ZZ$ that we used were that it is infinite and amenable. Therefore, the same results holds more generally for all (restricted) wreath products $G\wr \Gamma$ as long as $\Gamma$ is infinite amenable.

\medskip
A closer examination of the proof also shows that it holds for suitable permutational wreath products where $\ZZ$ is replaced by an amenable group with a permutation action having only infinite orbits; the ergodicity of the corresponding generalised Bernoulli shifts is recorded e.g. in~\cite[Prop.~2.1]{Kechris-Tsankov}.

\medskip
These generalisations can in turn be used to extend the statements of \Cref{thm:abstract} and of \Cref{cor:alg}. Specifically, instead of a single element $g$ and the corresponding commuting conjugates $G_0^{g^p}$, it suffices to assume there is some infinite amenable subgroup $\Gamma<G$ such that the $\Gamma$-conjugates of $G_0$ commute pairwise. We can proceed similarly to generalise \Cref{cor:action}.

\subsection{No more coefficients}
As recalled in the introduction, \Cref{thm:sep} does not hold without the separability assumption on the dual module $E$. Indeed, in that case the vanishing of $\hb^n(-, E)$ for all $n>0$ is equivalent to amenability.  (\Cref{thm:sep} does hold for some very specific non-separable modules such as the \emph{semi-separable} case introduced in~\cite{MonodVT}, but only because they can be reduced to the separable case.)

A very concrete example, not relying on the huge coefficient module witnessing non-amenability in general, is as follows. Choose a group $G$ with $\hb^2(G)\neq 0$, for instance a free group of rank two. Then, by inflation, $\hb^2\big(\bigoplus_\ZZ G \big)$ is also non-zero. It follows by cohomological induction~\cite[\S10.1]{Monod} that
\[
\hb^2\big( G\wr \ZZ, \ell^\infty(\ZZ) \big) \ \neq 0,
\]
where $G\wr \ZZ$ acts on $\ell^\infty(\ZZ)$ by translation via the quotient morphism $G\wr \ZZ \to\ZZ$. The only circumstance preventing a contradiction with \Cref{thm:sep} is that the dual Banach module $\ell^\infty(\ZZ)$ is not separable.

\medskip
We mention here that Grigorchuk asked about the vanishing of $\hb^n(F, E)$ for all $n\geq 2$ and all dual $E$, see Problem~3.19 in~\cite{Grigorchuk95}. This is a priori a weakening of amenability, first considered by Johnson in \S10.10 of~\cite{Johnson}. However, it was shown in~\cite[Cor.~5.10]{MonodICM} that this condition is in fact also equivalent to amenability; this result relies on the Gaboriau--Lyons theorem~\cite{Gaboriau-Lyons}.

\medskip
The condition that $E$ be \emph{dual} cannot be removed either (even when keeping it separable). Indeed it is well-known that the vanishing of $\hb^1(G, E)$ for all Banach modules characterises \emph{finite} groups $G$. This follows by applying the cohomological long exact sequence (for bounded cohomology) to the submodule inclusion $\ell^1_0 (G) \to \ell^1(G)$, where  $\ell^1_0 (G)$ denotes the summable functions with vanishing sum. This module is separable when $G$ is countable.

\subsection{Amenability vs. Ergodicity}
It is a remarkable fact (in the author's opinion) that \emph{every} group admits a Zimmer-amenable space $X$ which is doubly ergodic with separable coefficients. An early proof is found in~\cite{Burger-Monod1,Burger-Monod3} and the most luminous argument is in~\cite{Kaimanovich03}. This cannot be extended to higher ergodicity in general precisely because non-trivial bounded cohomology provides an obstruction.

This fact, together with the observation that the amenability of $F$ is equivalent to the Zimmer-amenability of the $T$-action on the circle, has prompted us to ask whether non-amenable groups can have  Zimmer-amenable actions that are multiply ergodic far beyond two factors, see Problem~H in~\cite{MonodICM}. The construction of Section~\ref{s-sec:wreath} shows that this is indeed possible.


\bibliographystyle{amsalpha}
\bibliography{../BIB/ma_bib}

\def\cprime{$'$}
\providecommand{\bysame}{\leavevmode\hbox to3em{\hrulefill}\thinspace}
\providecommand{\MR}{\relax\ifhmode\unskip\space\fi MR }
\providecommand{\MRhref}[2]{%
  \href{http://www.ams.org/mathscinet-getitem?mr=#1}{#2}
}
\providecommand{\href}[2]{#2}
\begin{thebibliography}{FFLM21b}

\bibitem[AEG94]{Adams-Elliott-Giordano}
Scot Adams, George~Arthur Elliott, and Thierry Giordano, \emph{Amenable actions
  of groups}, Trans. Amer. Math. Soc. \textbf{344} (1994), no.~2, 803--822.

\bibitem[Bav91]{Bavard91}
Christophe Bavard, \emph{Longueur stable des commutateurs}, Enseign. Math. (2)
  \textbf{37} (1991), no.~1-2, 109--150.

\bibitem[Ber02]{Berrick02}
A.~Jon Berrick, \emph{A topologist's view of perfect and acyclic groups},
  Invitations to geometry and topology, Oxf. Grad. Texts Math., vol.~7, Oxford
  Univ. Press, Oxford, 2002, pp.~1--28.

\bibitem[BM99]{Burger-Monod1}
Marc Burger and Nicolas Monod, \emph{Bounded cohomology of lattices in higher
  rank {L}ie groups}, J. Eur. Math. Soc. (JEMS) \textbf{1} (1999), no.~2,
  199--235.

\bibitem[BM02]{Burger-Monod3}
\bysame, \emph{Continuous bounded cohomology and applications to rigidity
  theory}, Geom. Funct. Anal. \textbf{12} (2002), no.~2, 219--280.

\bibitem[Bou87]{BourbakiTVS}
N.~Bourbaki, \emph{Topological vector spaces. {C}hapters 1--5}, Elements of
  Mathematics, Springer-Verlag, Berlin, 1987, Translated from the French by H.
  G. Eggleston and S. Madan.

\bibitem[Cal09]{Calegari_scl}
Danny Calegari, \emph{scl}, MSJ Memoirs, vol.~20, Mathematical Society of
  Japan, Tokyo, 2009.

\bibitem[CF11]{Cannon-Floyd}
James~Welden Cannon and William~J. Floyd, \emph{What is {$\ldots$} {T}hompson's
  group?}, Notices Amer. Math. Soc. \textbf{58} (2011), no.~8, 1112--1113.

\bibitem[CFP96]{Cannon-Floyd-Parry}
James~Welden Cannon, William~J. Floyd, and Walter~R. Parry, \emph{Introductory
  notes on {R}ichard {T}hompson's groups}, Enseign. Math. (2) \textbf{42}
  (1996), no.~3-4, 215--256.

\bibitem[CM09]{Caprace-Monod_discrete}
Pierre-Emmanuel Caprace and Nicolas Monod, \emph{{Isometry groups of
  non-positively curved spaces: discrete subgroups}}, J Topology \textbf{2}
  (2009), no.~4, 701--746.

\bibitem[Eym72]{Eymard72}
Pierre Eymard, \emph{Moyennes invariantes et repr\'esentations unitaires},
  Springer-Verlag, Berlin, 1972, Lecture Notes in Mathematics, Vol. 300.

\bibitem[FFL21]{FFLodha_1_draft}
Francesco Fournier-Facio and Yash Lodha, \emph{Second bounded cohomology of
  groups acting on 1-manifolds and applications to spectrum problems},
  Preprint, 2021.

\bibitem[FFLM21a]{FFLM_binate_draft}
Francesco Fournier-Facio, Clara L{\"o}h, and Marco Moraschini, \emph{Bounded
  cohomology and binate groups}, Preprint, 2021.

\bibitem[FFLM21b]{FFLM_2106.13567}
\bysame, \emph{Bounded cohomology of finitely generated groups: vanishing,
  non-vanishing, and computability}, Preprint, 2021.

\bibitem[GL09]{Gaboriau-Lyons}
Damien Gaboriau and Russell Lyons, \emph{A measurable-group-theoretic solution
  to von {N}eumann's problem}, Invent. Math. \textbf{177} (2009), no.~3,
  533--540.

\bibitem[Gri95]{Grigorchuk95}
Rostislav~I. Grigorchuk, \emph{Some results on bounded cohomology},
  Combinatorial and geometric group theory (Edinburgh, 1993), London Math. Soc.
  Lecture Note Ser., vol. 204, Cambridge Univ. Press, Cambridge, 1995,
  pp.~111--163.

\bibitem[Gro82]{Gromov}
Micha{\"\i}l Gromov, \emph{Volume and bounded cohomology}, Inst. Hautes
  \'Etudes Sci. Publ. Math. (1982), no.~56, 5--99 (1983).

\bibitem[GS87]{Ghys-Sergiescu}
{\'E}tienne Ghys and Vlad Sergiescu, \emph{Sur un groupe remarquable de
  diff\'eomorphismes du cercle}, Comment. Math. Helv. \textbf{62} (1987),
  no.~2, 185--239.

\bibitem[Joh72]{Johnson}
Barry~E. Johnson, \emph{Cohomology in {B}anach algebras}, AMS, 1972, Mem. Am.
  Math. Soc.~127.

\bibitem[Ka{\u\i}03]{Kaimanovich03}
Vadim~A. Ka{\u\i}manovich, \emph{Double ergodicity of the {P}oisson boundary
  and applications to bounded cohomology}, Geom. Funct. Anal. \textbf{13}
  (2003), no.~4, 852--861.

\bibitem[Kot08]{Kotschick08}
Dieter Kotschick, \emph{Stable length in stable groups}, Groups of
  diffeomorphisms, Adv. Stud. Pure Math., vol.~52, Math. Soc. Japan, Tokyo,
  2008, pp.~401--413.

\bibitem[KT08]{Kechris-Tsankov}
Alexander~Sotirios Kechris and Todor Tsankov, \emph{Amenable actions and almost
  invariant sets}, Proc. Amer. Math. Soc. \textbf{136} (2008), no.~2, 687--697
  (electronic).

\bibitem[LM16]{Lodha-Moore}
Yash {Lodha} and Justin~Tatch {Moore}, \emph{{A nonamenable finitely presented
  group of piecewise projective homeomorphisms}}, {Groups Geom. Dyn.}
  \textbf{10} (2016), no.~1, 177--200.

\bibitem[L{\"o}h17]{Loeh_note17}
Clara L{\"o}h, \emph{A note on bounded-cohomological dimension of discrete
  groups}, J. Math. Soc. Japan \textbf{69} (2017), no.~2, 715--734.

\bibitem[Mat71]{Mather71}
John~N. Mather, \emph{The vanishing of the homology of certain groups of
  homeomorphisms}, Topology \textbf{10} (1971), 297--298.

\bibitem[MM85]{Matsumoto-Morita}
Shigenori Matsumoto and Shigeyuki Morita, \emph{Bounded cohomology of certain
  groups of homeomorphisms}, Proc. Amer. Math. Soc. \textbf{94} (1985), no.~3,
  539--544.

\bibitem[MN21]{Monod-Nariman_arxv2}
Nicolas Monod and Sam Nariman, \emph{Bounded and unbounded cohomology of
  homeomorphism and diffeomorphism groups}, Preprint, 2021.

\bibitem[Mon01]{Monod}
Nicolas Monod, \emph{{Continuous bounded cohomology of locally compact
  groups}}, {Lecture Notes in Mathematics 1758, Springer, Berlin}, 2001.

\bibitem[Mon06]{MonodICM}
\bysame, \emph{{An invitation to bounded cohomology}}, {Proceedings of the
  international congress of mathematicians (ICM), Madrid, Spain, August 22--30,
  2006. Volume II: Invited lectures. Z\"urich: European Mathematical Society,
  1183--1211}, 2006.

\bibitem[Mon10]{MonodVT}
\bysame, \emph{On the bounded cohomology of semi-simple groups,
  {$S$}-arithmetic groups and products}, J. reine angew. Math. [Crelle's J.]
  \textbf{640} (2010), 167--202.

\bibitem[Mon13]{Monod_PNAS}
\bysame, \emph{Groups of piecewise projective homeomorphisms}, Proc. Natl.
  Acad. Sci. USA \textbf{110} (2013), no.~12, 4524--4527.

\bibitem[MP03]{Monod-Popa}
Nicolas Monod and Sorin Popa, \emph{On co-amenability for groups and von
  {N}eumann algebras}, C. R. Math. Acad. Sci. Soc. R. Can. \textbf{25} (2003),
  no.~3, 82--87.

\bibitem[Nos91]{Noskov}
Gennady~A. Noskov, \emph{{Bounded cohomology of discrete groups with
  coefficients}}, Leningr. Math. J. \textbf{2} (1991), no.~5, 1067--1084.

\bibitem[Tho65]{Thompson_unpublished}
Richard~Joseph Thompson, Unpublished notes, 1965.

\bibitem[Zim78]{Zimmer78b}
Robert~Jeffrey Zimmer, \emph{Amenable ergodic group actions and an application
  to {P}oisson boundaries of random walks}, J. Functional Analysis \textbf{27}
  (1978), no.~3, 350--372.

\bibitem[Zim84]{Zimmer84}
\bysame, \emph{Ergodic theory and semisimple groups}, Birkh\"auser Verlag,
  Basel, 1984.

\end{thebibliography}

\end{document}